\newtheorem{theorem}{Theorem}[section]
\newtheorem{proposition}[theorem]{Proposition}
\newtheorem{lemma}[theorem]{Lemma}
\theoremstyle{definition}
\newtheorem{remark}[theorem]{Remark}
\newcommand \Ab {{\bf A}}
\numberwithin{equation}{section}
\numberwithin{table}{section}
\numberwithin{figure}{section}
\DeclareMathOperator{\curl}{curl}
\DeclareMathOperator{\Div}{div}
\DeclareMathOperator{\Hess}{Hess}
\DeclareMathOperator{\Area}{Area}
\title[On the ground state energy of the Pauli operator]{On the 
semi-classical analysis\\ of the groundstate energy\\ of the Dirichlet Pauli operator}
\author{Bernard Helffer}
\author{Mikael Persson Sundqvist}
\address[Bernard Helffer]{ Laboratoire de Math\'ematiques Jean Leray, 
Universit\'e de Nantes,  2 rue de la Houssini\`ere, 44322 Nantes, France
and Laboratoire de
Math\'{e}matiques, Universit\'{e} Paris-Sud, France. 
}
\email{bernard.helffer@univ-nantes.fr}
\address[Mikael Persson Sundqvist]{Lund University, Department of Mathematical 
Sciences, Box 118, 221\ 00 Lund, Sweden.}
\email{mickep@maths.lth.se}
\subjclass[2010]{35P15; 81Q05, 81Q20}
\keywords{Pauli operator, Dirichlet, semiclassical, torsion}
\begin{document}

\begin{abstract}
 We discuss the results of a recent paper by Ekholm, Kova\v{r}\'ik and Portmann in 
connection with a question of C. Guillarmou about the semiclassical expansion
of the lowest eigenvalue of the Pauli operator with Dirichlet conditions. 
We exhibit connections with the properties of the torsion function in mechanics, 
the exit time of a Brownian motion and the analysis of the low eigenvalues of 
some Witten Laplacian.
\end{abstract}
\maketitle
\section{Introduction}
We study the Dirichlet realization of the Pauli operator
$P=(P_+,P_-)$,
\[
P_\pm:= (hD_{x_1} -A_1)^2 + (hD_{x_2}-A_2)^2 \pm h B(x)\,,
\]
in a bounded, regular and open set $\Omega \subset \mathbb R^2$. 
Here $D_{x_j} = -i \partial _{x_j}$ for $j=1$, $2$. We assume that the magnetic 
potential $(A_1,A_2)$ belongs to $C^\infty(\overline{\Omega})^2$ and denote by
$B(x) = \partial_{x_1} A_2 - \partial_{x_2} A_1$ the associated magnetic field. 

To be more precise, we are interested in the smallest eigenvalue of $P$ as 
$h\to 0^+$. Since the Pauli operator is the square of a Dirac operator this
eigenvalue is non-negative. By the diamagnetic
inequality it follows that if $B(x)\leq 0$ then the smallest 
eigenvalue $\lambda_{P_-}^D(h,B,\Omega)$ of $P_-$ satisfies
\begin{equation}
\label{eq:diamagnetic}
\lambda_{P_-}^D(h,B,\Omega)\geq h^2\lambda^D(\Omega).
\end{equation}
Here $\lambda^D(\Omega)$ denotes the smallest eigenvalue of the 
Dirichlet Laplacian $-\Delta$ in $\Omega$. By symmetry, the same statement
holds for $\lambda_{P_+}^D(h,B,\Omega)$, the smallest eigenvalue of $P_+$, in
case $B(x)\geq 0$. We will focus our study to the component $P_-$ under the assumption
\begin{equation}\label{Ass1}
\{ x \in \Omega : B(x) >0\} \neq \varnothing\,,
\end{equation}
sometimes with the stronger condition that $B(x)> 0$. This means that we
cannot directly apply the diamagnetic inequality 
to conclude the lower bound~\eqref{eq:diamagnetic}. In fact, we will see
that $\lambda_{P_-}^D(h,B,\Omega)$ is exponentially small.

In~\cite{EKP}, T.~Ekholm, H.~Kova\v{r}\'ik and F.~Portmann give a universal lower 
bound which can be formulated in the semiclassical context in the following way:

\begin{theorem}[{\cite[Theorem~2.1]{EKP}}]\label{thmEKP}
If $B$ does not vanish identically in the simply connected domain $\Omega$ there 
exists $\epsilon(B,\Omega) >0$ such that, $\forall h >0$,
\begin{equation}
 \lambda_{P_-}^D(h,B,\Omega) \geq   \lambda^D(\Omega) \, h^2\,  
\exp \bigl(- \epsilon (B,\Omega) /h\bigr) \,.
 \end{equation}
 \end{theorem}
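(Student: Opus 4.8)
The plan is to combine the supersymmetric factorisation of the Pauli operator with a ground‑state (Agmon‑type) change of unknown, and then to absorb the resulting exponential weight by its extremal values on $\overline\Omega$.

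First I would reduce to a convenient gauge. Since $\Omega$ is simply connected, the Hodge decomposition gives $A=\nabla\chi+\nabla^\perp\psi$ with $\nabla^\perp\psi=(-\partial_{x_2}\psi,\partial_{x_1}\psi)$ and $\chi,\psi\in C^\infty(\overline\Omega)$; indeed one may take $\psi$ to solve $\Delta\psi=\curl A=B$ in $\Omega$ with $\psi=0$ on $\dO$ (smooth up to $\dO$ by elliptic regularity), after which $A-\nabla^\perp\psi$ is curl‑free, hence, $\Omega$ being simply connected, equal to a gradient $\nabla\chi$ with $\chi$ smooth. The gauge transformation $u\mapsto \e^{-i\chi/h}u$ is unitary on $L^2(\Omega)$, preserves $H^1_0(\Omega)$, and conjugates $P_-$ into the operator obtained by replacing $A$ by $\nabla^\perp\psi$; so from now on I would assume $A=\nabla^\perp\psi$ with $\Delta\psi=B$.

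Next, set $\Pi_j:=hD_{x_j}-A_j$ and $D:=\Pi_1+i\Pi_2$. A short computation gives $[\Pi_1,\Pi_2]=ihB$, hence, as differential operators on $C_0^\infty(\Omega)$, $D^*D=\Pi_1^2+\Pi_2^2-hB=P_-$; integrating by parts and using density of $C_0^\infty(\Omega)$ in the form domain $H^1_0(\Omega)$ one gets $\langle P_-u,u\rangle=\|Du\|_{L^2(\Omega)}^2$ for every $u\in H^1_0(\Omega)$. Writing $\partial_{\overline z}=\tfrac12(\partial_{x_1}+i\partial_{x_2})$ and using $A=\nabla^\perp\psi$, one checks the factorised identity $Du=-2ih\,\e^{-\psi/h}\,\partial_{\overline z}\bigl(\e^{\psi/h}u\bigr)$. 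Introducing $f:=\e^{\psi/h}u$, which runs over $H^1_0(\Omega)$ as $u$ does (the weights $\e^{\pm\psi/h}$ being bounded above and below on $\overline\Omega$), this rewrites the Rayleigh quotient as
\[
\frac{\langle P_-u,u\rangle}{\|u\|^2}=4h^2\,\frac{\displaystyle\int_\Omega \e^{-2\psi/h}\,|\partial_{\overline z}f|^2\,\dd x}{\displaystyle\int_\Omega \e^{-2\psi/h}\,|f|^2\,\dd x}\,.
\]
To conclude I would bound the weight crudely: with $M:=\max_{\overline\Omega}\psi$ and $m:=\min_{\overline\Omega}\psi$, the numerator is $\ge \e^{-2M/h}\int_\Omega|\partial_{\overline z}f|^2\,\dd x$ and the denominator is $\le \e^{-2m/h}\int_\Omega|f|^2\,\dd x$. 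Combined with the elementary identity $\int_\Omega|\partial_{\overline z}f|^2\,\dd x=\tfrac14\int_\Omega|\nabla f|^2\,\dd x$, valid for all $f\in H^1_0(\Omega;\mathbb C)$ — this is just integration by parts, the cross term $\operatorname{Im}\int_\Omega\overline{\partial_{x_1}f}\,\partial_{x_2}f\,\dd x$ vanishing, equivalently the case $A\equiv0$, $B\equiv0$, $h=1$ of the factorisation above — and the Dirichlet Poincaré inequality $\int_\Omega|\nabla f|^2\,\dd x\ge\lambda^D(\Omega)\int_\Omega|f|^2\,\dd x$, this gives
\[
\frac{\langle P_-u,u\rangle}{\|u\|^2}\ \ge\ \lambda^D(\Omega)\,h^2\,\e^{-2(M-m)/h}\,,
\]
so by the variational (min--max) characterisation of $\lambda_{P_-}^D(h,B,\Omega)$ the theorem holds with $\epsilon(B,\Omega)=2\bigl(\max_{\overline\Omega}\psi-\min_{\overline\Omega}\psi\bigr)$, which is strictly positive since $\Delta\psi=B$ does not vanish identically, so $\psi$ is not constant (one may moreover minimise $\epsilon$ over the harmonic functions that can be added to $\psi$).

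I expect the only delicate points to be the gauge reduction — existence and boundary regularity of $\psi$, where simple connectedness of $\Omega$ enters — and the passage from the algebraic identity $D^*D=P_-$ on $C_0^\infty(\Omega)$ to the form identity $\langle P_-u,u\rangle=\|Du\|^2$ on all of $H^1_0(\Omega)$; both are routine. The conceptual heart, and the reason the bound is exponentially small rather than of order $h^2\lambda^D(\Omega)$, is that after the substitution $f=\e^{\psi/h}u$ the operator $P_-$ becomes comparable, uniformly in $h$, to the massless Cauchy--Riemann operator $\partial_{\overline z}$ on $\Omega$, with comparison constant $\e^{-2(\max_{\overline\Omega}\psi-\min_{\overline\Omega}\psi)/h}$.
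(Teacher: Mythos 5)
Your argument is correct and is essentially the route the paper itself takes in Section~3 (Theorem~\ref{th2.1}): reduce to the gauge $A=\nabla^\perp\psi$ with $\Delta\psi=B$, use the supersymmetric factorization to get the identity \eqref{eq:pr1} via the substitution $u=e^{-\psi/h}v$, bound the exponential weight by its extreme values, and finish with the Dirichlet Poincar\'e inequality. The only difference is that you keep the full oscillation $\max_{\overline\Omega}\psi-\min_{\overline\Omega}\psi$ instead of $-\psi_{min}$, so you avoid the sign condition $B>0$ of Theorem~\ref{th2.1} and recover the original EKP statement for $B\not\equiv 0$ directly.
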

The proof of the theorem gives a way of computing some upper bound for 
$\epsilon (B,\Omega)$, by considering the oscillation of scalar potentials
$\psi$, i.e. solutions of $\Delta \psi =B$, and optimizing over $\psi$.  
Although the authors treat interesting examples, they do not give a systematic 
approach for determining the optimal lower bound.

Scalar potentials will also play a main role for us, and we will
fix a canonical one associated with a specific choice of the gauge for the 
magnetic potential, as used for example in superconductivity theory.
Thus, below, $\psi$ will denote the solution of the Poisson problem with
Dirichlet boundary condition,
\begin{equation}
\label{eq:psi}
\begin{cases}
\Delta\psi=B & \text{in $\Omega$}\\
\psi=0 & \text{on $\partial\Omega$}.
\end{cases}
\end{equation}

It turns out that this $\psi$ gives the minimal oscillation discussed
in~\cite{EKP} under the further condition that $B(x)>0$. In order to answer 
negatively a question of C.~Guillarmou, who was asking for an example for 
which $B(x)>0$ and there exists $N_0$ and $c_N>0$ such that 
$\lambda_{P_-}^D(h,B,\Omega)\geq c_N h^N$, we also discuss upper bounds for the
ground state energy, and state our main result:
\begin{theorem}
\label{th1.3}
Assume that $B(x) >0$ in $\Omega$ and that $\psi$ satisfies~\eqref{eq:psi}. Then 
\begin{equation}
\label{aa2}
\lim_{h\to 0^+} h \log  \lambda_{P_-}^D(h,B,\Omega) = 2 \inf_\Omega \psi\,.
\end{equation}
\end{theorem}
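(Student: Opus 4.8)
The plan is to exploit the supersymmetric structure of the Pauli operator. Setting $Q := (hD_{x_1}-A_1) + i(hD_{x_2}-A_2)$, the commutator relation $[hD_{x_1}-A_1,\,hD_{x_2}-A_2] = ihB$ gives $P_- = Q^*Q$, hence
\[
\lambda_{P_-}^D(h,B,\Omega) = \inf_{u\in H^1_0(\Omega)\setminus\{0\}} \frac{\|Qu\|^2}{\|u\|^2}\,.
\]
Since $\Omega$ is simply connected, a gauge transformation $u\mapsto \e^{i\phi/h}u$ reduces us to the gauge $(A_1,A_2) = (-\partial_{x_2}\psi,\partial_{x_1}\psi)$ with $\psi$ the Dirichlet scalar potential of~\eqref{eq:psi}; recall that $\psi\in C^\infty(\overline\Omega)$ by elliptic regularity, that $\psi<0$ in $\Omega$ and $\psi=0$ on $\partial\Omega$ by the maximum principle (using $B>0$), and that $\inf_\Omega\psi$ is attained at an interior point. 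Writing $z=x_1+ix_2$ and $\partial_{\bar z}=\tfrac12(\partial_{x_1}+i\partial_{x_2})$, one then checks the factorisation $Q = -2ih\,\e^{-\psi/h}\,\partial_{\bar z}\,\e^{\psi/h}$. The substitution $u = \e^{-\psi/h}v$ is a linear bijection of $H^1_0(\Omega)$ (because $\e^{\pm\psi/h}\in C^\infty(\overline\Omega)$), and it turns the Rayleigh quotient into the weighted $\bar\partial$-quotient
\[
\lambda_{P_-}^D(h,B,\Omega) = 4h^2 \inf_{v\in H^1_0(\Omega)\setminus\{0\}} \frac{\int_\Omega |\partial_{\bar z}v|^2\, \e^{-2\psi/h}\,\dd x}{\int_\Omega |v|^2\, \e^{-2\psi/h}\,\dd x}\,.
\]
Everything then reduces to sandwiching this weighted quotient, and here is where the two halves of the argument diverge.

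For the upper bound I would take the trial state $u = \chi\,\e^{-\psi/h}$ (or, more generally, $\chi\,\e^{-\psi/h}f$ with $f$ holomorphic), where $\chi\in C_c^\infty(\Omega)$ equals $1$ near a minimiser $x_0$ of $\psi$ and has its transition region inside $\{\,\psi>-\delta\,\}$ for a small $\delta>0$. Since $\partial_{\bar z}f=0$, the factorisation gives $Qu = -2ih\,\e^{-\psi/h}(\partial_{\bar z}\chi)f$, supported where $\psi>-\delta$. By Laplace's method (Varadhan's lemma), $h\log\|u\|^2\to -2\inf_\Omega\psi$ while $h\log\|Qu\|^2\le 2\delta+o(1)$, so $\limsup_{h\to0^+} h\log\lambda_{P_-}^D(h,B,\Omega)\le 2\inf_\Omega\psi+2\delta$; letting $\delta\to0$ yields the upper bound in~\eqref{aa2}.

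For the lower bound I would use the Cauchy--Pompeiu representation: extending $v\in H^1_0(\Omega)$ by zero and putting $g=\partial_{\bar z}v$ (supported in $\overline\Omega$), we have $v(z) = \frac1\pi\int_\Omega \frac{g(\zeta)}{z-\zeta}\,\dd\zeta$. Inserting the weight and applying the Cauchy--Schwarz inequality in $\zeta$, together with $\e^{2\psi/h}\le1$ and the uniform kernel bound $\sup_{z\in\overline\Omega}\int_\Omega |z-\zeta|^{-1}\,\dd\zeta =: C_\Omega<\infty$, yields $|v(z)|^2 \le \frac{C_\Omega}{\pi^2}\int_\Omega \frac{|g(\zeta)|^2\,\e^{-2\psi(\zeta)/h}}{|z-\zeta|}\,\dd\zeta$. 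Multiplying by $\e^{-2\psi(z)/h}\le \e^{-2\inf_\Omega\psi/h}$, integrating in $z$ and invoking Fubini together with the same kernel bound once more (a Schur-type estimate), I get
\[
\int_\Omega |v|^2\,\e^{-2\psi/h}\,\dd x \;\le\; \frac{C_\Omega^2}{\pi^2}\,\e^{-2\inf_\Omega\psi/h}\int_\Omega |\partial_{\bar z}v|^2\,\e^{-2\psi/h}\,\dd x\,,
\]
so $\lambda_{P_-}^D(h,B,\Omega)\ge \frac{4\pi^2 h^2}{C_\Omega^2}\,\e^{2\inf_\Omega\psi/h}$ and hence $\liminf_{h\to0^+} h\log\lambda_{P_-}^D(h,B,\Omega)\ge 2\inf_\Omega\psi$. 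Combining the two bounds proves~\eqref{aa2}.

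The load-bearing points are the choice of the Dirichlet potential $\psi$ --- it is exactly its vanishing on $\partial\Omega$ that forces $\sup_\Omega\psi=0$, so the relevant oscillation of the scalar potential collapses to $-\inf_\Omega\psi$ and the upper and lower exponents match --- and the weighted Cauchy-transform (Schur) bound, which sharpens the gauge-optimised oscillation estimate of~\cite{EKP} to the exact constant. I do not expect a genuine obstacle at the level of the $h\log$ asymptotics. The delicate part, which is not needed for Theorem~\ref{th1.3} but is precisely where the torsion function, the exit time of Brownian motion and the low-lying spectrum of an associated Witten Laplacian enter, is the subexponential (power-of-$h$) prefactor of $\lambda_{P_-}^D(h,B,\Omega)$.
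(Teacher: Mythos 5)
Your proposal is correct, and its skeleton is the same as the paper's: the substitution $u=\exp(-\psi/h)v$ (equivalently the factorisation $Q=-2ih\,\mathrm{e}^{-\psi/h}\partial_{\bar z}\,\mathrm{e}^{\psi/h}$) is exactly the identity~\eqref{eq:pr1} taken from \cite{EV,EKP}, and your upper bound with the trial state $\chi\,\mathrm{e}^{-\psi/h}$, cutoff supported in $\{\psi>-\delta\}$ and $\delta\to0$, is precisely the Proposition of Section~\ref{sec:upper} (the paper's sharper trial state $\mathrm{e}^{-\psi/h}-\mathrm{e}^{\psi/h}$ is only needed for the prefactor in Theorem~\ref{th3.3}, which, as you note, is irrelevant for the $h\log$ asymptotics, so the Hessian dichotomy of Section~\ref{sec:upper} never enters). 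Where you genuinely diverge is the lower bound: the paper (Theorem~\ref{th2.1}) bounds the weight below by $\mathrm{e}^{2\psi_{min}/h}$, uses that $\int_\Omega|(\partial_{x_1}+i\partial_{x_2})v|^2\,dx=\int_\Omega|\nabla v|^2\,dx$ for $v\in H_0^1(\Omega)$ (integration by parts) and then the Dirichlet eigenvalue $\lambda^D(\Omega)$, obtaining the explicit bound $h^2\lambda^D(\Omega)\mathrm{e}^{2\psi_{min}/h}$ valid for every $h>0$ and directly comparable to the diamagnetic benchmark~\eqref{eq:diamagnetic} and to \cite{EKP}; you instead represent $v$ by the Cauchy--Pompeiu transform of $\partial_{\bar z}v$ and run a weighted Cauchy--Schwarz/Schur estimate, which is equally elementary and also loses exactly the oscillation $-2\psi_{min}$ in the exponent, but yields the less transparent constant $4\pi^2/C_\Omega^2$ in place of $\lambda^D(\Omega)$. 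Both mechanisms give $\lambda_{P_-}^D(h,B,\Omega)\geq c\,h^2\mathrm{e}^{2\psi_{min}/h}$, which is all that~\eqref{aa2} requires, so your argument is complete; the only caveat worth recording is that, like the paper's own upper-bound argument, your gauge reduction to $(A_1,A_2)=(-\partial_{x_2}\psi,\partial_{x_1}\psi)$ with $\psi=0$ on all of $\partial\Omega$ uses that $\Omega$ is simply connected, which is indeed the implicit setting of Theorem~\ref{th1.3}.
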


In particular, this shows that when $B(x) >0$, the optimal $\epsilon (B,\Omega)$ 
is given by the analysis of~\eqref{eq:psi}, justifying a posteriori our choice 
of $\psi$. Moreover, it raises the question
on finding $\inf_\Omega\psi$. Kawohl proves in~\cite{K1,K2} that
if $\Omega$ is strictly convex and $B(x)$ constant then the infimum is attained 
at a unique point $x_0\in\Omega$. We discuss this, and further properties
of $\psi$ in Section~\ref{sec:torsion}.

Our lower bound leading to Theorem~\ref{th1.3} is true for all $h>0$. 
It is stated and proved in Section~\ref{sec:EKP}, see Theorem~\ref{th2.1}.

Under additional assumptions on $\Omega$ or $B(x)$, we will propose a deeper 
analysis giving more accurate lower bounds or upper bounds and giving in 
particular the equivalent of $\lambda_{P_-}^D(h,B,\Omega)$ when $\Omega$ is the 
disk and $B(x)$ is constant. In fact, we are able to give the main asymptotic
term for all exponentially small eigenvalues. This also permits to clarify some 
miss-statements \cite{HeMo1, FH} appearing in the literature and to improve 
(or correct) the results of~\cite{E1,E2,EV},~\cite{HeMo1},~\cite{FH} and~\cite{EKP}.

\begin{theorem}
\label{thm:Pauli-disk}
Assume that $\Omega=D(0,R)$ is the disk of radius $R$, and that the magnetic
field $B(x)$ is constant, $B(x)=B>0$.
Let $\lambda^D_{P_-,0}(h)$ denote the smallest eigenvalues of $P_-$. 
Then, as $h\to 0^+$,
\begin{equation}\label{asympdisk}
\lambda^D_{P_-,0}(h)=B^2R^2 e^{-BR^2/2h}(1+\mathcal O(h)).
\end{equation}
\end{theorem}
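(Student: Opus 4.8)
\emph{Strategy.} Since $\Omega=D(0,R)$ and $B$ is constant, the gauge attached to the canonical scalar potential $\psi(r)=\tfrac B4(r^2-R^2)$ of~\eqref{eq:psi} is the symmetric one, $A=\tfrac B2(-x_2,x_1)$, and $P_-$ commutes with the angular momentum. Hence $L^2(D(0,R))$ decomposes into the closed subspaces spanned by $u=\e^{\iz m\theta}v(r)$, $m\in\mathbb Z$, each invariant under $P_-$, and $\lambda^D_{P_-,0}(h)=\min_{m\in\mathbb Z}\lambda_m(h)$, where $\lambda_m(h)$ is the bottom of the (self-adjoint, discrete) spectrum of the restriction of $P_-$ to the sector $m$. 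Writing $P_-$ in polar coordinates, that restriction is the one–dimensional operator
\[
\mathfrak h_m:=-h^2\Bigl(\frac{\dd^2}{\dd r^2}+\frac1r\frac{\dd}{\dd r}\Bigr)+\Bigl(\frac{hm}{r}-\frac{Br}{2}\Bigr)^2-hB
\]
on $L^2((0,R),r\,\dd r)$, with Dirichlet condition at $r=R$ and the Friedrichs condition at $r=0$, which forces $v(r)=O(r^{|m|})$. So the first step is to understand $\lambda_m(h)$ for every $m$.

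\emph{Exact solution of the radial problem.} The key point is that $\mathfrak h_m$ is solvable in closed form. Substituting $v(r)=\e^{-Br^2/4h}\,r^{|m|}\,g(t)$ with $t=Br^2/2h$ turns the eigenvalue equation $\mathfrak h_m v=\lambda v$ into Kummer's confluent hypergeometric equation
\[
t\,\ddot g+\bigl(|m|+1-t\bigr)\dot g+\Bigl(\varepsilon_m-\frac{\lambda}{2Bh}\Bigr)g=0,\qquad
\varepsilon_m=\begin{cases}0,& m\ge0,\\[2pt] |m|,& m<0,\end{cases}
\]
whose solution regular at $t=0$ is the Kummer function $g(t)=M\bigl(\varepsilon_m-\tfrac{\lambda}{2Bh},\,|m|+1,\,t\bigr)$. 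Imposing $v(R)=0$ turns the eigenvalue problem into the quantization condition
\[
M\Bigl(\varepsilon_m-\frac{\lambda}{2Bh},\,|m|+1,\,T\Bigr)=0,\qquad T:=\frac{BR^2}{2h}.
\]
(When $m\ge0$ and $\lambda=2Bhn$, $n\in\mathbb N$, one recovers the Landau modes $\e^{-Br^2/4h}r^m L_n^{(m)}(Br^2/2h)$; they do not satisfy $v(R)=0$, which is precisely the source of the exponential smallness.) Thus $\lambda_m(h)=2Bh\,\nu_m(h)$, where $\nu_m(h)>0$ is the smallest positive zero of $\nu\mapsto M(\varepsilon_m-\nu,|m|+1,T)$.

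\emph{Asymptotics.} It remains to analyse these zeros as $T\to\infty$, i.e. $h\to0^+$. Using the standard large–argument asymptotics $M(a,b,T)=\tfrac{\Gamma(b)}{\Gamma(a)}\e^T T^{a-b}\bigl(1+O(1/T)\bigr)+\tfrac{\Gamma(b)}{\Gamma(b-a)}T^{-a}\bigl(1+O(1/T)\bigr)$ together with $1/\Gamma(-\nu)=-\nu+O(\nu^2)$ as $\nu\to0$, one finds for $m\ge0$ that the smallest positive zero satisfies $\nu_m(h)=\tfrac{T^{m+1}}{m!}\,\e^{-T}\bigl(1+O(1/T)\bigr)$, while the next zero sits near $\nu=1$; for $m<0$ the smallest positive zero sits near $\nu=|m|$. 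Consequently the exponentially small eigenvalues of $P_-$ are exactly $\lambda_m(h)=2Bh\,\nu_m(h)=\tfrac{2Bh}{m!}\,T^{m+1}\e^{-T}(1+O(h))$, $m\ge0$ (all other eigenvalues, in every sector, being $\gtrsim h$), and they are minimised at $m=0$:
\[
\lambda^D_{P_-,0}(h)=\lambda_0(h)=2Bh\cdot T\,\e^{-T}(1+O(h))=B^2R^2\,\e^{-BR^2/2h}(1+O(h)),
\]
which is~\eqref{asympdisk}. This is consistent with Theorem~\ref{th1.3}, since here $\inf_\Omega\psi=\psi(0)=-BR^2/4$.

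\emph{Main obstacle.} Two points carry the weight. First, justifying $\nu_m(h)=\tfrac{T^{m+1}}{m!}\e^{-T}(1+O(1/T))$ with the stated remainder: this requires a careful use of the large–argument expansion of $M$, checking that the $T^{\pm\nu}$ factors and the $\Gamma$–quotient corrections contribute only doubly exponentially small perturbations, so that the relative error is genuinely $O(h)$ and originates from the $O(1/T)$ term of the expansion. Second, and more delicate, one must rule out that some high angular sector beats $m=0$: for $m\ge1$ one has $\nu_m/\nu_0\sim T^m/m!\ge1$ only while $m\lesssim T$, so in the large–$m$ range one argues directly from $\mathfrak h_m$ — for instance bounding $\mathfrak h_m$ from below by $\min_{r\in(0,R]}\bigl[(\tfrac{hm}{r}-\tfrac{Br}{2})^2-hB\bigr]$, which is a positive constant once $m$ exceeds $T$ by more than $O(\sqrt{T})$ — combined with the Kummer analysis for $m$ of order $T$. (The lower bound $\lambda^D_{P_-,0}(h)\ge c\,\e^{-BR^2/2h}$ alone also follows from Theorem~\ref{th2.1}; the new content is the matching constant $B^2R^2$ and the precise remainder.)
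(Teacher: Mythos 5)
Your reduction to the radial operators and the Kummer quantization condition $\mathbf M\bigl(-\lambda/(2hB),m+1,BR^2/(2h)\bigr)=0$ is exactly the paper's starting point, and your way of extracting the per-sector asymptotics (locating the exponentially small zero from the large-argument expansion of $M$, using $1/\Gamma(-\nu)\approx-\nu$) is a legitimate alternative to the paper's route, which instead uses the explicit trial state $v(r)=r^m\bigl(e^{BR^2/4h-Br^2/4h}-e^{Br^2/4h-BR^2/4h}\bigr)$ for the upper bound and Temple's inequality for the matching lower bound, the required spectral gap $\lambda^D_{m,1}(h)\geq 2hB$ coming from the zero-counting Lemma~\ref{lem:Mzeros}. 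For fixed $m$ both methods work; the Temple route buys you freedom from any bookkeeping of the recessive term and of the $\mathcal O(1/T)$ remainders, which your route still has to verify (standard, but not free).

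The genuine gap is the reduction to $m=0$. The paper does not prove that the minimum over angular momenta is attained at $m=0$: it invokes Erd\H{o}s's rotational symmetry result \cite{E1}, giving $\lambda^D_{P_-,0}(h)=\lambda^D_{0,0}(h)$, and it even remarks that uniform control in $m$ is missing from the estimates of Theorem~\ref{thm:Pauli-m}, which is precisely why those estimates do not yield an independent proof of Erd\H{o}s's theorem. Your sketch runs into exactly this difficulty: the asymptotics $\nu_m(h)=\frac{T^{m+1}}{m!}e^{-T}(1+\mathcal O(1/T))$ is established only for fixed $m$ (the error terms in the large-$T$ expansion of $M(-\nu,m+1,T)$ grow with the parameter $b=m+1$, roughly like $m^2/T$, so the claim is not uniform once $m$ grows with $1/h$); the potential bound $\bigl(\frac{hm}{R}-\frac{BR}{2}\bigr)^2-hB$ only covers $m\geq T+C\sqrt T$ (and it gives $\gtrsim hB$, not a constant, though that is ample) together with all $m<0$; and the remaining window of $m$ growing with $1/h$ up to $T+\mathcal O(\sqrt T)$, where you appeal to ``the Kummer analysis for $m$ of order $T$,'' would require uniform asymptotics of $M$ in both the parameter and the argument (Temme/Laguerre-type expansions), which you neither state nor prove. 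Either carry out that uniform analysis, or do as the paper does and quote Erd\H{o}s's result that the ground state lies in the sector $m=0$; with that citation, your argument restricted to $m=0$ is sound and reproduces \eqref{asympdisk}.
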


The proof of this theorem, given in Section~\ref{sec:disk}, relies heavily on 
the rotational symmetry of the ground state which was proven by L. Erd\H{o}s in~\cite{E1}.
In fact, we make an orthogonal decomposition
of $P_-$ using angular momentum. The upper bound comes from a very accurate 
quasimode. For each angular momentum we obtain a spectral gap, enabling us to
use the Temple inequality with the quasimode to get also the matching lower 
bound. It is the lack of this spectral gap that prohibits us to find the
corresponding lower bound for general domains $\Omega$.

In Section~\ref{ss6.1}, we recall a statement of L.~Erd\H{o}s permitting  in 
the constant magnetic field case a direct comparison between 
$\lambda^D_{P_-} (h,B,\Omega)$ and  $\lambda^D_{P_-} (h,B,D(0,R))$
where $D(0,R)$ is the disk of same area as $\Omega$.

In Section~\ref{sec:torsion} we collect some general properties of
the scalar potential $\psi$ in connection with the torsion problem.

We close this introduction with several remarks.

\begin{remark}
The case $B(x)\leq0\,$, mentioned above, can be analyzed further under 
additional conditions (for example, one could think to start with the case 
when $B(x)$ has a non degenerate negative  minimum at a point of $\Omega$). 
We refer to Helffer--Mohamed~\cite{HeMo0}, Helffer--Morame~\cite{HeMo1}, 
Helffer--Kordyukov~\cite{HK} and Raymond--Vu Ngoc~\cite{RVN} for the analysis of 
the Schr\"odinger with magnetic field, and note that the addition of the term 
$-h B(x)$ in $P_-$ can be controlled in their analysis. 
\end{remark}

\begin{remark}
When $B(x)=2\,$, the solution $\psi$ of~\eqref{eq:psi} appears to be $-f_\tau$, where 
$f_\tau$ is the so-called torsion function which plays an important role in Mechanics. 
For this reason there are a lot of treated examples in the engineering 
literature and a lot of mathematical studies, starting from the fifties with 
P\'olya--Szeg\"o \cite{PS}. This permits in particular to improve the applications 
given in \cite{EKP}.
\end{remark}

\begin{remark}
The problem we study is quite close  with the question of analyzing the 
smallest eigenvalue of the Dirichlet realization of:
\[
C_0^\infty(\Omega)\ni v 
\mapsto h^2 \,\int_{\Omega}\left|\nabla  v (x)\right|^2\;e^{-2f(x)/h}~dx\;.
\]
For this case, we can mention  Theorem~7.4 in \cite{FrWe}, 
which says (in particular) that, if $f$ has a unique non-degenerate local 
minimum $x_{min}$, then the lowest eigenvalue $\lambda_1(h)$
of the Dirichlet realization $\Delta^{(0)}_{f,h}$ in $\Omega$ satisfies:
\begin{equation}
\lim_{h\rightarrow 0} - h \,\log \lambda_1(h)= \inf_{x\in \partial \Omega} (f(x) - f(x_{min}))\;.
\end{equation}
More precise or general results (prefactors) are given in~\cite{BEGK, BGK,HeNi}. 
This is connected with the semi-classical analysis of Witten 
Laplacians~\cite{W, HS1,HS2, CFKS, Sim, HKN}.
\end{remark}

\begin{remark}
It would be interesting to analyze flux effects in the case of a non simply 
connected domain. We hope to come back to this question in another work.
\end{remark}

\subsection*{Acknowledgements}
The first author was aware of the question analyzed in~\cite{EKP} at a meeting 
in Oberwolfach organized in 2014 by V.~Bonnaillie-No\"el, H.~Kova\v{r}\'ik  
and K.~Pankrashkin where the authors present their work. He would like to also 
thank D. Bucur,  L.~Erd\H{o}s,  C.~Guillarmou, and B. Kawohl  for useful discussions leading 
to this paper.

\section{Preliminaries}

Following what is done for 
example in superconductivity (see~\cite{FH}), we can, possibly after a gauge 
transformation, assume that the magnetic vector potential $\Ab =(A_1,A_2)$ 
satisfies
\begin{equation}\label{eq:5a}
\begin{cases}
\curl \Ab=B \text{ and }\Div\Ab=0& \text{in $\Omega$}\\
\Ab\cdot\nu = 0 & \text{on $\partial\Omega$}\,.
\end{cases}
\end{equation}
If this is not satisfied for say $\Ab_0$ satisfying $\curl \Ab_0=B$, we can 
construct 
\[
\Ab = \Ab_0 + \nabla \phi
\]
satisfying in addition \eqref{eq:5a}, by choosing $\phi$ as a solution of
\[
\begin{cases}
-\Delta \phi = \Div \Ab_0 & \text{ in } \Omega\\
\nabla \phi\cdot \nu = - \Ab_0\cdot \nu & \mbox{ on } \partial \Omega\,,
\end{cases}
\]
which is unique if we add the condition $\int_\Omega \phi (x)\, dx =0\,$.
In this case there exists a scalar potential $\psi$ such that
\begin{equation}\label{eq:5}
A_1 = - \partial_{x_2} \psi\,,\, A_2 = \partial_{x_1} \psi\,.
\end{equation}
The condition $ \Ab \cdot \nu =0 \mbox{ on } \partial \Omega$ 
implies that $\psi =\psi_i$ is constant on each connected component 
$\Gamma_i$ of the boundary.
Hence, if  in addition  $\Omega$ is simply connected,  
there exists a unique $\psi$ such that \eqref{eq:5} is satisfied 
and~\eqref{eq:psi} holds.
In this case, we should write instead 
\begin{equation}\label{nsc}
\Delta \psi = B\text{ in } \Omega\,,\,  \psi =\psi_i - \sup \psi_i   \text{ on } \Gamma_i\,.
\end{equation}
The link with the circulations of $\Ab$ along each $\Gamma_i$ is analyzed at least formally in~\cite{JA}.

If $B(x)>0$, the minimax principle shows that $\psi \leq 0\,$, and, if 
$\Omega$ is simply connected,  because $B$ is not identically $0$, there exists 
at least a point $x_0 \in \Omega$ such that 
\begin{equation}
\psi (x_0) = \inf \psi < 0\,.
\end{equation}
We write
\[
\psi_{min}:=\inf \psi \,.
\]
In the non simply connected case, the infimum can be attained at one 
component of the boundary.

\section{The lower bound of Ekholm--Kova\v{r}\'ik--Portmann revisited.}
\label{sec:EKP}

We come back to the scheme of proof of the lower bound in~\cite{EKP}, and
state a more explicit bound for positive magnetic fields:
\begin{theorem}\label{th2.1}
Assume that $B(x)>0$ in $\Omega\,$. Then
\begin{equation}
 \lambda_{P_-}^D(h,B,\Omega) \geq h^2  \lambda^D(\Omega) 
\, \exp \Bigl(\frac{2 \psi_{min}}{h}\Bigr) \,.
\end{equation}
\end{theorem}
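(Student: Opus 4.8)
The plan is to transform the Pauli operator $P_-$ into a weighted Laplacian by exploiting the supersymmetric (ground-state substitution) structure, exactly as in the Ekholm--Kova\v{r}\'ik--Portmann scheme, but with the canonical choice $\psi$ of \eqref{eq:psi}. With the gauge \eqref{eq:5a}--\eqref{eq:5} so that $A_1 = -\partial_{x_2}\psi$, $A_2 = \partial_{x_1}\psi$ and $\Delta\psi = B$ with $\psi = 0$ on $\partial\Omega$, one checks that $hD_{x_1} - A_1 + i(hD_{x_2} - A_2) = e^{-\psi/h}(hD_{x_1} + ihD_{x_2}) e^{\psi/h}$ acting on the relevant factors, so that for $u \in C_0^\infty(\Omega)$, writing $u = e^{\psi/h} v$, one gets the quadratic form identity
\begin{equation}
\label{eq:qform}
\langle P_- u, u\rangle = h^2 \int_\Omega |\nabla v|^2\, e^{2\psi/h}\, dx\,,
\end{equation}
where on the left the norm is $\|u\|^2 = \int_\Omega |v|^2 e^{2\psi/h}\, dx$. (The cross terms combine: the $\pm hB$ term is precisely what cancels the square of the gradient of the phase, up to the boundary term which vanishes by the Dirichlet condition on $\psi$.) This reduces the computation of $\lambda_{P_-}^D(h,B,\Omega)$ to that of the lowest eigenvalue of the weighted problem $v \mapsto h^2 \int_\Omega |\nabla v|^2 e^{2\psi/h}\, dx \big/ \int_\Omega |v|^2 e^{2\psi/h}\, dx$.

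Next I would bound this weighted Rayleigh quotient from below by the unweighted Dirichlet one. Since $\psi \le 0$ in $\Omega$ (noted in the Preliminaries) and $\psi \ge \psi_{min}$, we have $e^{2\psi/h} \ge e^{2\psi_{min}/h}$ everywhere, so the numerator obeys
\begin{equation}
h^2 \int_\Omega |\nabla v|^2\, e^{2\psi/h}\, dx \ge h^2 e^{2\psi_{min}/h} \int_\Omega |\nabla v|^2\, dx \ge h^2 e^{2\psi_{min}/h}\, \lambda^D(\Omega) \int_\Omega |v|^2\, dx\,,
\end{equation}
using the variational characterization of $\lambda^D(\Omega)$. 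Meanwhile, since $\psi \le 0$, the denominator satisfies $\int_\Omega |v|^2 e^{2\psi/h}\, dx \le \int_\Omega |v|^2\, dx$. Combining the two inequalities gives the Rayleigh quotient $\ge h^2 \lambda^D(\Omega) e^{2\psi_{min}/h}$, and taking the infimum over admissible $v$ yields the claimed bound.

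The main subtlety — the step I would treat carefully — is the justification of the identity \eqref{eq:qform}, i.e. that the conjugated operator really does produce the clean weighted form with no residual first-order or zeroth-order terms and no boundary contribution. This hinges on three facts: that the gauge can be chosen as in \eqref{eq:5a} with a single-valued $\psi$ (which requires $\Omega$ simply connected, or the weaker normalization \eqref{nsc}); that the $\mp hB = \mp h\Delta\psi$ term in $P_\pm$ is exactly the term generated by commuting $e^{\pm\psi/h}$ through the magnetic Laplacian (a short computation with $\Div \Ab = 0$); and that the map $v \mapsto e^{\psi/h} v$ is a bijection from $H_0^1(\Omega)$ with the weighted norm onto the form domain of $P_-$, which is fine since $\psi$ is bounded and smooth up to the boundary so the weight $e^{2\psi/h}$ is bounded above and below by positive constants for each fixed $h>0$. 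Once this unitary-equivalence-after-rescaling is in place, the two elementary pointwise bounds $e^{2\psi/h} \ge e^{2\psi_{min}/h}$ and $e^{2\psi/h} \le 1$ finish the argument; no semiclassical analysis is needed for this particular lower bound, which is why it holds for all $h>0$.
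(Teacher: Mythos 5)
Your strategy is the same as the paper's (the Ekholm--Kova\v{r}\'ik--Portmann ground-state substitution with the canonical $\psi$, then pointwise bounds on the weight plus the Dirichlet eigenvalue), but the central identity you rely on is not correct as stated, on two counts. First, a sign: your own conjugation formula $(hD_{x_1}-A_1)+i(hD_{x_2}-A_2)=e^{-\psi/h}(hD_{x_1}+ihD_{x_2})e^{\psi/h}$ forces the substitution $u=e^{-\psi/h}v$ and the weight $e^{-2\psi/h}$ (for $P_-$ with $B>0$ the decaying factor is $e^{-\psi/h}$, as for lowest-Landau-level states of the form $e^{-\psi/h}\times\text{holomorphic}$); with your choice $u=e^{\psi/h}v$ the conjugated form keeps residual $\nabla\psi$-terms and your displayed quadratic-form identity fails. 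This slip happens to be harmless for the final bound, since only the ratio of the infimum to the supremum of the weight enters, and that ratio equals $e^{2\psi_{min}/h}$ with either sign convention. Second, and more seriously: even with the correct substitution, the exact identity (the paper's \eqref{eq:pr1}, taken from Erd\H{o}s--Vougalter and \cite{EKP}) involves $|(\partial_{x_1}+i\partial_{x_2})v|^2$ under the weight, not $|\nabla v|^2$. For complex-valued $v$ these differ pointwise by $-2\,\mathrm{Im}\bigl(\overline{\partial_{x_1}v}\,\partial_{x_2}v\bigr)$ (it vanishes only for real $v$, while the form must be controlled for all complex test functions), and against a nonconstant weight this cross term does not integrate to zero. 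So the Pauli form is \emph{not} equal to the weighted Dirichlet (Witten $0$-form) form you wrote down; asserting that equality is the genuine gap, and it is exactly why the paper's introductory remark calls the weighted-Laplacian problem only ``quite close'' to the Pauli problem.

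The repair is a reordering of your own steps, and it is precisely the paper's proof of Theorem~\ref{th2.1}: keep the exact identity with $|(\partial_{x_1}+i\partial_{x_2})v|^2$ and weight $e^{-2\Psi/h}$, where $\Psi=\psi-\psi_{min}\ge 0$; bound the weight below by the \emph{constant} $e^{2\psi_{min}/h}$ first; only then use that, for $v\in H_0^1(\Omega)$, $\int_\Omega|(\partial_{x_1}+i\partial_{x_2})v|^2\,dx\geq\int_\Omega|\nabla v|^2\,dx$, because the unweighted cross term disappears after integration by parts thanks to the Dirichlet condition; then apply the variational characterization of $\lambda^D(\Omega)$, and finally $\int_\Omega|v|^2\,dx\geq\int_\Omega|u|^2\,dx$ since $e^{-\Psi/h}\leq 1$. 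With that ordering, the rest of your argument --- including the remark that no semiclassical input is used, so the bound holds for every $h>0$ --- coincides with the paper's.
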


\begin{proof}
We bound the quadratic form from below. Let
\[
\Psi = \psi -\psi_{min}\,.
\]
Then
\[
0 \leq \Psi \leq -\psi_{min}\,.
\]
We write
\begin{equation}
\label{eq:uv}
u = \exp \Bigl(- \frac{\Psi }{h}\Bigr)\,  v\,,
\end{equation}
and use following identity (see  \cite{EV} and (2.4) in \cite{EKP})
\begin{equation}\label{eq:pr1}
\langle u,P_-u\rangle
= h^2 \int_\Omega \exp \Bigl(- 2 \frac{\Psi}{h}\Bigr) |(\partial_{x_1} + i \partial_{x_2}) v|^2\, dx\,,
\end{equation}
valid if $\Ab$ satisfies \eqref{eq:5a}. With $u$ (and consequently  $v$) in 
$H_0^1(\Omega)$, we get
\begin{equation}\label{eq:pr2}
\begin{aligned}
\langle u,P_-u\rangle
& \geq h^2 \exp \Bigl(\frac{2 \psi_{min}}{h}\Bigr)  \int_\Omega |(\partial_{x_1} + i \partial_{x_2}) v|^2\, dx \\
& \geq h^2 \exp \Bigl(\frac{2 \psi_{min}}{h}\Bigr)  \int_\Omega |\nabla  v (x)|^2\, dx\\
& \geq h^2 \exp \Bigl(\frac{2 \psi_{min}}{h}\Bigr)  \lambda^D(\Omega) \int_\Omega |v(x)|^2 dx\\
& \geq h^2 \exp \Bigl(\frac{2 \psi_{min}}{h}\Bigr)  \lambda^D(\Omega) \int_\Omega |u(x)|^2 dx\,.
\end{aligned}
\end{equation}
Here we have used the Dirichlet condition on $v$ to justify the integration by 
part in the second step.
\end{proof}
Note that the statement is much more explicit than in~\cite{EKP}. It is unclear 
at this stage that it is accurate (we just make a choice of one $\Psi$ and 
in~\cite{EKP} there was a minimization over all $\Psi$ satisfying 
$\Delta \Psi =B$). But note also that our sign condition on the magnetic field
is much stronger.

\section{Upper bounds in the simply connected case}
\label{sec:upper}

With the explicit choice of $\psi$ from~\eqref{eq:psi}, it is straightforward 
to get:
 \begin{proposition}
Assume that $\Omega$ is simply connected and $B(x)>0$ in $\Omega$. Then, for 
any $\eta >0\,$, there exists a $C_\eta >0$ such that
\begin{equation}
 \lambda_{P_-}^D(h,B,\Omega) \leq C_\eta  \exp \Bigl(\frac{2 \psi_{min}}{h}\Bigr)
\exp \Bigl(\frac{2\eta}{ h}\Bigr)\,.
\end{equation}
\end{proposition}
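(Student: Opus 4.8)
The plan is to construct an explicit trial function (quasimode) for $P_-$ and estimate its Rayleigh quotient. The natural candidate, suggested by the ground-state identity~\eqref{eq:pr1}, is to take $v$ concentrated near a point $x_0$ where $\psi$ attains its minimum $\psi_{min}$, and to set $u = \exp(-\Psi/h)\,v$ with $\Psi = \psi - \psi_{min}$ as in~\eqref{eq:uv}. Indeed, with this substitution one has exactly
\begin{equation*}
\langle u, P_- u\rangle = h^2 \int_\Omega \exp\Bigl(-2\tfrac{\Psi}{h}\Bigr)\,|(\partial_{x_1}+i\partial_{x_2})v|^2\,dx,
\end{equation*}
while the denominator is $\langle u,u\rangle = \int_\Omega \exp(-2\Psi/h)\,|v|^2\,dx$. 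The key point is that the weight $\exp(-2\Psi/h)$ concentrates mass near $x_0$, where $\Psi$ vanishes. If $v$ is chosen to be a fixed smooth cutoff equal to $1$ on a neighborhood of $x_0$ and supported in a small ball $B(x_0,r)\Subset\Omega$, then the numerator involves $(\partial_{x_1}+i\partial_{x_2})v$, which is supported in the annular region where $\Psi \geq \delta > 0$ for some $\delta$ depending on $r$; there the weight is at most $\exp(-2\delta/h)$, which is super-exponentially small compared to any prefactor.

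The core of the estimate is therefore to compare the two integrals. For the denominator, $\int_\Omega \exp(-2\Psi/h)\,|v|^2\,dx \geq \int_{B(x_0,\rho)} \exp(-2\Psi/h)\,dx$ for small $\rho$, and since $\Psi(x_0)=0$ and $\Psi$ is continuous, given any $\eta>0$ one can choose $\rho$ small enough that $\Psi \leq \eta/2$ on $B(x_0,\rho)$, whence the denominator is bounded below by $|B(x_0,\rho)|\exp(-\eta/h)$, a constant times $\exp(-\eta/h)$. For the numerator, $(\partial_{x_1}+i\partial_{x_2})v$ vanishes except on $B(x_0,r)\setminus B(x_0,\rho')$ for suitable radii, where $\Psi$ is bounded below by $0$; using merely $\Psi \geq 0$ gives the crude bound $h^2 \int |(\partial_{x_1}+i\partial_{x_2})v|^2\,dx = h^2 C(v)$. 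Hence
\begin{equation*}
\lambda_{P_-}^D(h,B,\Omega) \leq \frac{\langle u,P_-u\rangle}{\langle u,u\rangle} \leq \frac{h^2 C(v)}{c_\rho\,\exp(-\eta/h)} = \frac{C(v)}{c_\rho}\,h^2\,\exp\Bigl(\frac{\eta}{h}\Bigr).
\end{equation*}
Multiplying through the trivial relation $1 = \exp(-2\psi_{min}/h)\exp(2\psi_{min}/h)$ — and recalling $\psi_{min}<0$ so $\exp(2\psi_{min}/h)\le 1$ — one absorbs $h^2$ and all $h$-independent constants into a single $C_\eta$ and arrives at the claimed bound $\lambda_{P_-}^D(h,B,\Omega) \leq C_\eta \exp(2\psi_{min}/h)\exp(2\eta/h)$ after relabeling $\eta$ (the factor $2$ is harmless).

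I expect the only genuinely delicate point to be the bookkeeping with the constants and the continuity argument that lets $\eta$ be arbitrary: one must be careful to fix $v$ (hence $C(v)$) and the radius $\rho$ \emph{after} $\eta$ is given, so that $C_\eta$ is allowed to blow up as $\eta\to 0$ — which is exactly what the statement permits. A secondary technical point is to confirm that $u = \exp(-\Psi/h)v \in H^1_0(\Omega)$, which is immediate since $v\in C_0^\infty(\Omega)$ and $\Psi$ is smooth (as $B\in C^\infty$), and to invoke~\eqref{eq:pr1}, which requires $\Ab$ to satisfy~\eqref{eq:5a}; this can be assumed after a gauge transformation as explained in Section~2, and such a gauge transformation does not change the spectrum of $P_-$.
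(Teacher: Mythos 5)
Your construction does not prove the stated bound: the estimate you actually derive is $\lambda_{P_-}^D(h,B,\Omega)\leq C(v)c_\rho^{-1}h^2\exp(\eta/h)$, and the final step, in which you ``multiply through'' by $1=\exp(-2\psi_{min}/h)\exp(2\psi_{min}/h)$ and absorb constants, is invalid. The factor $\exp(-2\psi_{min}/h)$ is not an $h$-independent constant; since $\psi_{min}<0$ it blows up exponentially as $h\to0$, so it cannot be absorbed into $C_\eta$. What the proposition asserts is that the eigenvalue is exponentially \emph{small}, of order $\exp\bigl(-2(|\psi_{min}|-\eta)/h\bigr)$, whereas your bound does not even show it is $o(1)$. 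The source of the loss is the choice of cutoff: you place the transition region of $v$ on a small annulus around the minimum point $x_0$, where $\Psi=\psi-\psi_{min}$ is close to $0$, so the weight $\exp(-2\Psi/h)$ in the numerator of \eqref{eq:pr1} is only bounded by $1$ (or by $\exp(-2\delta/h)$ with a small $\delta$ depending on the radius), while the denominator near $x_0$ carries weight at least $\exp(-\eta/h)$. The ratio of these weights gives no gain of order $\exp(2\psi_{min}/h)$; that factor has to come from somewhere, and in your setup it never appears.

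The fix is to cut off near the \emph{boundary}, not near the minimum: take $v_\eta$ compactly supported in $\Omega$, equal to $1$ outside a small neighborhood of $\partial\Omega$ chosen (by continuity of $\psi$ and $\psi=0$ on $\partial\Omega$, which uses simple connectedness to ensure the boundary value is $0$ on the whole boundary) so that $-\eta\leq\psi\leq 0$ on the support of $\nabla v_\eta$. Then in \eqref{eq:pr1}, written with the weight $\exp(-2\psi/h)$, the numerator is at most $C_\eta h^2\exp(2\eta/h)$ because the gradient of the cutoff sits where $\exp(-2\psi/h)\leq\exp(2\eta/h)$, while the denominator is bounded below by the contribution of a small ball around a point where $\psi$ is within $\eta$ of $\psi_{min}$, giving at least $c_\eta\exp\bigl(-2(\psi_{min}+\eta)/h\bigr)$, which is exponentially \emph{large}. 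Dividing, the quotient is at most $C'_\eta h^2\exp(2\psi_{min}/h)\exp(4\eta/h)$, and relabeling $\eta$ yields the proposition; this is exactly the paper's trial state $u=\exp(-\psi/h)v_\eta$. In other words, the exponential gain comes from the cutoff gradient living where $\psi\approx 0$ (so $\Psi\approx|\psi_{min}|$ is maximal) and the $L^2$ mass living where $\psi\approx\psi_{min}$; your geometry has these two regions essentially on top of each other.
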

The proof is obtained by taking as  trial state 
$u= \exp\bigl( - \frac{\psi}{h}\bigr)\,  v_\eta$, 
with $v_\eta $ compactly supported in $\Omega$ and $v_\eta$ being equal to $1$ 
outside a sufficiently small neighborhood of the boundary and implementing this 
trial state in~\eqref{eq:pr1}. One concludes by the max-min principle. This proof 
does not work in the non simply connected case.

The first idea for improvement is to take $\eta = h$ and to control with respect 
to $h$, but it appears better to  consider the trial state
\begin{equation}
\label{optqm}
u=\exp \Bigl(-\frac{\psi}{h}\Bigr)  -\exp \Bigl(\frac{\psi}{h}\Bigr)\,.
\end{equation}
When $\Omega$ is simply connected $u$ evidently satisfies 
the Dirichlet condition. Using the substitution~\eqref{eq:uv},
\[
v = 1 - \exp \Bigl(\frac{2\psi}{h}\Bigr)\,.
\]
Thus,
\begin{equation*}
\begin{aligned}
\frac{h^2\int_{\Omega}\exp\bigl(-\frac{2\psi}{h}\bigr)|(\partial_{x_1}+i\partial_{x_2})v|^2\,dx}{\int_{\Omega}\exp\bigl(-\frac{2\psi}{h}\bigr)|v|^2\,dx} & 
=  4  \frac{\int_{\Omega}\exp\bigl(\frac{2\psi}{h}\bigr)|(\partial_{x_1}+i\partial_{x_2})\psi|^2\,dx}{\int_{\Omega}\exp\bigl(-\frac{2\psi}{h}\bigr)|v|^2\,dx}  \\
& =  4  \frac{\int_{\Omega}\exp\bigl(\frac{2\psi}{h}\bigr)|\nabla \psi|^2\,dx}{\int_{\Omega}\exp\bigl(-\frac{2\psi}{h}\bigr) |v|^2\,dx}  \\
&=  2 h   \frac{\int_{\Omega}\bigl(\nabla \exp\bigl(\frac{2\psi}{h}\bigr)\bigr) \cdot \nabla \psi \,dx}{\int_{\Omega}\exp\bigl(-\frac{2\psi}{h}\bigr)|v|^2\,dx}  \\
\end{aligned}
\end{equation*}
We integrate by parts in the numerator,
\begin{equation}\label{upbnd1}
\begin{aligned}
\int_{\Omega}\Bigl(\nabla \exp\Bigl(\frac{2\psi}{h}\Bigr)\Bigr) \cdot \nabla \psi \,dx & = \int_{\partial \Omega} \nu \cdot \nabla \psi    -\int_{\Omega} \exp\Bigl(\frac{2\psi}{h}\Bigr) B(x) \,dx \\
& = \int_\Omega B(x) \, dx   -\int_{\Omega} \exp\Bigl(\frac{2\psi}{h}\Bigr) B(x)  \,dx\\
& \leq \int_\Omega B(x) \, dx \,.
\end{aligned}
\end{equation}
In the simply connected case, we know by Hopf Lemma that $\nu\cdot \nabla \psi$ 
does not vanish at the boundary. This implies, using the Laplace integral 
method in  a tubular neighborhood of the boundary, that 
\[
\int_{\Omega} \exp\Bigl(\frac{2\psi}{h}\Bigr) B(x) \,dx = \mathcal O (h)\,.
\]
This suggests that the upper bound in~\eqref{upbnd1} is rather sharp, but we 
will not use this property.

We turn to the denominator. In case there is a unique point $x_0\in\Omega$ where
$\psi$ attains its minimum, we have the following lower bound, for some 
$\epsilon>0$,
\begin{equation}
\label{minden}
\begin{aligned}
\int_{\Omega}\exp\Bigl(-\frac{2\psi}{h}\Bigr)|v|^2\,dx  
& \geq \int_{D(x_0,\epsilon)}\exp\Bigl(-\frac{2\psi}{h}\Bigr)|v|^2\,dx\\
& \sim \exp \Bigl(\frac{-2\psi_{min}}{h}\Bigr) \int_{D(x_0,\epsilon)}  
\exp\Bigl( -  \frac{2(\psi(x) -\psi_{min})}{h}\Bigr) \,dx\,.
\end{aligned}
\end{equation}
If there are a finite number~$N$ of points $x_j$ in $\Omega$ such that 
$\psi(x_j) =\psi_{min}$,
\begin{align}
\label{mindenN}
\int_{\Omega}\exp\Bigl(-\frac{2\psi}{h}\Bigr)|v|^2\,dx  
& \geq  \sum_{j=1}^{N} \int_{D(x_j,\epsilon)}\exp\Bigl(-\frac{2\psi}{h}\Bigr)|v|^2\,dx\\
&\notag \sim \sum_{j=1}^{N} \exp \Bigl(\frac{-2\psi_{min}}{h}\Bigr) 
\int_{D(x_j,\epsilon)}  \exp\Bigl( -  \frac{2(\psi(x) -\psi_{min})}{h}\Bigr) \,dx\,,
\end{align}
where $\epsilon>0$ is chosen such that the balls  $D(x_j,\epsilon)$ are disjoint.

If  $\Hess\psi(x_j)$ is positive, using the Laplace integral method, we get
\begin{equation}
\label{asympfin}
\int_{D(x_j,\epsilon)}  \exp\Bigl( -  \frac{2(\psi (x)-\psi_{min})}{h}\Bigr) \,dx 
\sim \pi  h (\det(\Hess \psi(x_j)))^{-\frac 12} \,.
\end{equation}

If $\Hess\psi(x_j)$ is not positive definite, then it necessarily has one zero 
eigenvalue and the other one equals $B(x_j)$. After a change of variable, we 
can assume that, with $x_j=(x_{j1},x_{j2})$, there exists $C_\epsilon$ such that
\[
\psi (x,y)-\psi_{min} \leq C_\epsilon   ( (x- x_{j1}) ^2 + (y-x_{j2})^4) \,, \, 
\text{ in }  D(x_j,\epsilon)\,.
\]
In this case we get, the existence of $d_\epsilon >0$ such that, as $h\to 0\,$, 
\begin{equation}
\label{mindena}
\int_{D(x_j,\epsilon)}  \exp\Bigl(-\frac{2(\psi (x)-\psi_{min})}{h}\Bigr) \,dx 
\geq  d_\epsilon  h^\frac 34.
\end{equation}

This leads us to consider two cases:
\begin{enumerate}[(1),leftmargin=*]
\item\label{case1} For all the $x_j$ such that $\psi (x_j)=\psi_{min}$, 
$\Hess \psi (x_j)$ positive definite.
\item\label{case2} There exists $x_0$ such that $\psi (x_0) =\psi_{min}$ and 
$\Hess \psi (x_0)$ is degenerate.
\end{enumerate}
Note that Case~\ref{case1} contains the case when $\Omega$ is convex and 
$B(x) = B >0$. In this case there is a unique $x_j$ 
(see Proposition~\ref{propkaw} below).

The lower bounds in~\eqref{minden} and~\eqref{mindenN} together with the upper 
bound deduced from~\eqref{upbnd1} lead to the following upper bounds.
\begin{theorem}\label{th3.3}
Assume that $\Omega$ is simply connected and that $B(x)>0$ in $\Omega$. Denote
by $\Phi=\frac{1}{2\pi}\int_{\Omega}B(x)\,dx$ the flux of the magnetic field
$B(x)$ through $\Omega$.
\begin{enumerate}[a),leftmargin=*]
\item In case~\ref{case1}, as $h\to 0^+$, 
\begin{equation}
 \lambda_{P_-}^D(h,B,\Omega) \leq  4\Phi \,  \biggl[
 \sum_{j=1}^{N} \left( \det \,(\Hess \psi(x_j))\right)^{-\frac 12}\, \biggr]^{-1} 
\exp\Bigl( \frac{2 \psi_{min}}{h}\Bigr) (1+o(1)) \,.
\end{equation}
\item In case~\ref{case2}, as $h\to 0^+$,
\begin{equation}
 \lambda_{P_-}^D(h,B,\Omega) = \mathcal O (h^{\frac 14} ) \exp\Bigl( \frac{2 \psi_{min}}{h}\Bigr)  \,.
\end{equation}
\end{enumerate}
\end{theorem}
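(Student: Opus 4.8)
The plan is to use the trial state $u = \exp(-\psi/h) - \exp(\psi/h)$ from~\eqref{optqm} together with the max-min principle, so that $\lambda_{P_-}^D(h,B,\Omega)$ is bounded above by the Rayleigh quotient already computed in the lines preceding~\eqref{upbnd1}, namely
\[
\lambda_{P_-}^D(h,B,\Omega) \leq 2h\,
\frac{\int_\Omega \bigl(\nabla \exp(2\psi/h)\bigr)\cdot\nabla\psi\,dx}
{\int_\Omega \exp(-2\psi/h)\,|v|^2\,dx}\,,
\qquad v = 1 - \exp(2\psi/h)\,.
\]
For the numerator I would invoke~\eqref{upbnd1}, which gives the clean bound $\int_\Omega (\nabla\exp(2\psi/h))\cdot\nabla\psi\,dx \leq \int_\Omega B(x)\,dx = 2\pi\Phi$; this is where the flux $\Phi$ enters. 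So the whole problem reduces to a sharp lower bound on the denominator $\int_\Omega \exp(-2\psi/h)|v|^2\,dx$, and the two cases of the theorem correspond exactly to the two asymptotic regimes~\eqref{asympfin} and~\eqref{mindena} for the Laplace-type integral near the minimum points of $\psi$.

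In Case~\ref{case1}, I would localize to disjoint balls $D(x_j,\epsilon)$ around the finitely many minima as in~\eqref{mindenN}. On each such ball $|v|^2 = |1-\exp(2\psi/h)|^2 \to 1$ uniformly (since $\psi < 0$ away from the boundary, $\exp(2\psi/h)\to 0$ there), so the denominator is asymptotically $\exp(-2\psi_{min}/h)\sum_{j=1}^N \int_{D(x_j,\epsilon)}\exp(-2(\psi(x)-\psi_{min})/h)\,dx$. Since $\Hess\psi(x_j)$ is positive definite at each $x_j$, the standard Laplace/stationary-phase expansion~\eqref{asympfin} yields $\pi h (\det\Hess\psi(x_j))^{-1/2}(1+o(1))$ for each term. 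Plugging the numerator bound $2\pi\Phi$ and this denominator estimate into the Rayleigh quotient, the factors of $h$ cancel against the $2h$ out front and one is left with
\[
\lambda_{P_-}^D(h,B,\Omega) \leq 4\Phi\Bigl[\sum_{j=1}^N (\det\Hess\psi(x_j))^{-1/2}\Bigr]^{-1}
\exp\Bigl(\tfrac{2\psi_{min}}{h}\Bigr)(1+o(1))\,,
\]
which is statement a). In Case~\ref{case2}, I would simply keep a single degenerate minimum $x_0$ and use the one-sided bound~\eqref{mindena}, $\int_{D(x_0,\epsilon)}\exp(-2(\psi-\psi_{min})/h)\,dx \geq d_\epsilon h^{3/4}$; then the Rayleigh quotient is $\leq 2h\cdot 2\pi\Phi/(d_\epsilon h^{3/4}\exp(-2\psi_{min}/h)) = \mathcal O(h^{1/4})\exp(2\psi_{min}/h)$, which is statement b).

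I expect the main technical obstacle to be making the ``$|v|^2 \to 1$ on $D(x_j,\epsilon)$'' step rigorous enough to extract the asymptotic equivalence (not just an inequality) in Case~\ref{case1}: one must check that $\psi$ stays bounded away from $0$ on the (closed) balls $D(x_j,\epsilon)$, which requires $\epsilon$ small enough that the balls avoid the boundary, and one must control the error from the cross term $-2\exp(2\psi/h)$ and from the tails of the Laplace integral outside the balls. The contribution of $\Omega\setminus\bigcup_j D(x_j,\epsilon)$ to the denominator is of order $\exp(-2\mu/h)$ with $\mu = \inf_{\Omega\setminus\bigcup_j D(x_j,\epsilon)}(-\psi) < -\psi_{min}$, hence exponentially smaller than the main term, so it is harmless; the Hopf-lemma remark before~\eqref{upbnd1} guarantees the boundary layer where $\exp(-2\psi/h)$ is not small has measure $\mathcal O(h)$, but since $|v|^2$ vanishes there like $\exp(4\psi/h)$ this region contributes $\mathcal O(1)$, again negligible against $\exp(-2\psi_{min}/h)$. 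Everything else is the routine Laplace-method bookkeeping already signalled in~\eqref{asympfin} and~\eqref{mindena}.
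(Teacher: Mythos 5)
Your proposal is correct and follows essentially the same route as the paper: the trial state~\eqref{optqm} in the Rayleigh quotient, the integration by parts bound~\eqref{upbnd1} giving the numerator $2\pi\Phi$, and the denominator lower bounds~\eqref{mindenN}, \eqref{asympfin} and~\eqref{mindena} for the two cases, with the constants combining exactly as you computed. The extra care you devote to justifying $|v|^2\to 1$ on the small balls and to the negligible contributions elsewhere is consistent with (indeed slightly more explicit than) the paper's treatment, so there is nothing further to add.
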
 
In the case of the disk $D(0,R)$, we get, as $h\to 0^+\,$,
\begin{equation}\label{upbddisk1}
 \lambda_{P_-}^D(h,B,D(0,R))\leq B^2 R^2 \exp (-B R^2/2h) (1 + o(1))\,.
\end{equation}
We will see in Section~\ref{sec:disk} that it is optimal for the disk. 
Unfortunately, the proof is specific of the disk.

\begin{remark}
\begin{itemize}[--,leftmargin=*]
\item[]
\item
When $\Omega$ is not simply connected, we can use the domain monotonicity of 
the Dirichlet problem and apply the previous result for any simply connected 
domain $\widetilde \Omega$ contained in $\Omega$.
The natural question is then to find the optimal domain and it is unclear if 
this would lead to the optimal decay.
\item
The same monotonicity argument could be used  for treating the case when $B$ 
is changing sign. We should add in this case for the choice of 
$\widetilde \Omega$  the condition that $B >0$ on $\widetilde \Omega$.
\end{itemize}
\end{remark}

\section{The case of  the disk in the constant magnetic case revisited.}
\label{sec:disk}
In this section we work with constant magnetic field $B(x)=B>0$ in the 
disk $\Omega=D(0,R)$, and our aim is to prove Theorem~\ref{thm:Pauli-disk}. 
We actually present a more general analysis of the spectrum.

We introduce polar coordinates $(r,\theta)$ via $x_1=r\cos\theta$ and
$x_2=r\sin\theta$. As is well-known, the variables separate,
and we are led to study the infinite family of operators 
$\mathcal P_m(h)$, $m\in\mathbb Z$, where
\[
\mathcal P_m(h)=h^2\Bigl[-\frac{d^2}{dr^2}-\frac{1}{r}\frac{d}{dr}
+\Bigl(\frac{m}{r}-\frac{Br}{2h}\Bigr)^2\Bigl]-hB.
\]
The spectrum of $P_-$ is the union of the spectrum of 
the operators $\mathcal P_m(h)$. 
\begin{theorem}
\label{thm:Pauli-m} For $m\in \mathbb Z$, let 
$\lambda_{m,0}^D(h)\leq \lambda_{m,1}^D(h)\leq\ldots$ denote 
the increasing sequence of eigenvalues of $\mathcal P_m(h)$. 
\begin{enumerate}[a),leftmargin=*]
\item\label{it:a} If $m<0$ then
$\lambda_{m,0}^D(h)\geq (2|m|-1)hB\,$.
\item\label{it:b} If $m\geq 0$ then 
$\lambda_{m,k}^D(h)\geq 2hBk\,$.

\noindent In particular, the second
eigenvalue satisfies $\lambda_{m,1}^D(h)\geq 2hB\,$.
\item\label{it:c} If $m\geq 0$, then, as $h\to 0^+\,$,
\[
\lambda_{m,0}^D(h)=2hB\frac{(BR^2/2h)^{m+1}}{m!}e^{-BR^2/2h}(1+O(h))\,.
\]
\end{enumerate}
\end{theorem}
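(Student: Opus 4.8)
\emph{The three parts, and what is soft.} I would handle \ref{it:a}--\ref{it:b} by a cheap argument and \ref{it:c} by a quasimode/Temple argument. For \ref{it:a}--\ref{it:b}, the point is that up to an additive constant $\mathcal P_m(h)$ is the radial part of a harmonic oscillator: expanding the square,
\[
\mathcal P_m(h)=h^2\Bigl[-\frac{d^2}{dr^2}-\frac1r\frac{d}{dr}+\frac{m^2}{r^2}\Bigr]+\frac{B^2r^2}{4}-hB(m+1)=:\mathcal H_m(h)-hB(m+1),
\]
where $\mathcal H_m(h)$ is the operator obtained by separating variables in $-h^2\Delta+\tfrac{B^2}{4}|x|^2$ on $\R^2$ in the angular sector $|m|$. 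Its spectrum on the full half-line is classical, $\{hB(|m|+1+2k):k\ge0\}$ (Laguerre functions), all eigenvalues simple. Since the Dirichlet form domain of $\mathcal H_m(h)$ on $(0,R)$ embeds, by extension by zero, into that on $(0,\infty)$ with the forms agreeing, the min--max principle gives $\lambda_{m,k}^D(h)+hB(m+1)\ge hB(|m|+1+2k)$, i.e.
\[
\lambda_{m,k}^D(h)\ \ge\ hB(|m|-m)+2hBk .
\]
For $m\ge0$ this is $2hBk$, which is~\ref{it:b}; for $m<0$, $k=0$, it is $2hB|m|\ge(2|m|-1)hB$, which is~\ref{it:a}. (Alternatively \ref{it:a} drops straight out of the quadratic form: for $m\le-1$, $\bigl(\tfrac{hm}{r}-\tfrac{Br}{2}\bigr)^2=\bigl(\tfrac{h|m|}{r}+\tfrac{Br}{2}\bigr)^2\ge h|m|B$ pointwise, so $\langle f,\mathcal P_m(h)f\rangle\ge(h|m|B-hB(m+1))\|f\|^2=(2|m|-1)hB\|f\|^2$.)

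\emph{Part~\ref{it:c}: the quasimode.} Fix $m\ge0$. The function $r^me^{-Br^2/4h}$ is annihilated by $\mathcal P_m(h)$ but does not vanish at $r=R$; the plan is to bend it to zero in a thin boundary layer. Writing $f=r^me^{-Br^2/4h}g$ one has the ground-state substitution
\[
\langle f,\mathcal P_m(h)f\rangle=h^2\!\int_0^R\!w_m|g'|^2\,dr,\qquad \|f\|^2=\int_0^R\!w_m|g|^2\,dr,\qquad \mathcal P_m(h)f=r^me^{-Br^2/4h}\,T_mg,
\]
with $w_m(r)=r^{2m+1}e^{-Br^2/2h}$ and $T_mg=-h^2g''-h^2\bigl(\tfrac{2m+1}{r}-\tfrac{Br}{h}\bigr)g'$. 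Near $r=R$ the operator $T_m$ reads, to leading order, $-h^2g''-hBR\,g'$, whose bounded solution with $g(R)=0$ tending to a constant inside is $g(r)=1-e^{-BR(R-r)/h}$ --- a layer of width $h/(BR)$ (equivalently, the minimiser of $\int_0^R w_m|g'|^2\,dr$ among profiles $\equiv1$ in the bulk and $0$ at $R$). I would therefore take
\[
f_m(r)=r^me^{-Br^2/4h}\bigl(1-e^{-BR(R-r)/h}\bigr)\ \in\ \text{Dirichlet form domain of }\mathcal P_m(h),
\]
the angular-momentum-$m$ analogue of the quasimode~\eqref{optqm}.

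\emph{Part~\ref{it:c}: the two bounds.} For the upper bound I would compute the Rayleigh quotient of $f_m$: with $g'=-\tfrac{BR}{h}e^{-BR(R-r)/h}$, the numerator is $B^2R^2\int_0^R r^{2m+1}e^{-Br^2/2h-2BR(R-r)/h}\,dr$, whose exponent becomes $-\tfrac{BR^2}{2h}-\tfrac{BRt}{h}-\tfrac{Bt^2}{2h}$ after $r=R-t$, so a Watson--Laplace expansion yields $\langle f_m,\mathcal P_m(h)f_m\rangle=hBR^{2m+2}e^{-BR^2/2h}(1+\mathcal O(h))$; the denominator equals $\int_0^R r^{2m+1}e^{-Br^2/2h}\,dr$ up to exponentially small terms, and $\int_0^\infty r^{2m+1}e^{-Br^2/2h}\,dr=\tfrac{m!}{2}(2h/B)^{m+1}$. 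Dividing gives $\lambda_{m,0}^D(h)\le 2hB\,\tfrac{(BR^2/2h)^{m+1}}{m!}\,e^{-BR^2/2h}(1+\mathcal O(h))$. For the matching lower bound I would use Temple's inequality with $f_m$ together with the gap from~\ref{it:b}: since $\lambda_{m,1}^D(h)\ge 2hB$ while $\lambda_m:=\langle f_m,\mathcal P_m(h)f_m\rangle/\|f_m\|^2$ is exponentially small,
\[
\lambda_{m,0}^D(h)\ \ge\ \lambda_m-\frac{\|(\mathcal P_m(h)-\lambda_m)f_m\|^2}{\|f_m\|^2\,(2hB-\lambda_m)} .
\]
A short computation gives $T_mg=BR\bigl(B(R-r)+\tfrac{h(2m+1)}{r}\bigr)e^{-BR(R-r)/h}$, and Laplace's method shows $\|(\mathcal P_m(h)-\lambda_m)f_m\|^2\le\int_0^R w_m|T_mg|^2\,dr=\mathcal O\bigl(h^3e^{-BR^2/2h}\bigr)$; combined with $\|f_m\|^2\sim\tfrac{m!}{2}(2h/B)^{m+1}$ and $2hB-\lambda_m\sim2hB$, the subtracted term is $\mathcal O(h)\lambda_m$, so $\lambda_{m,0}^D(h)\ge\lambda_m(1-\mathcal O(h))$, matching the upper bound.

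\emph{The main obstacle.} Parts~\ref{it:a}--\ref{it:b} are routine. In~\ref{it:c} the delicate point is that the quasimode must be accurate \emph{in the graph norm of $\mathcal P_m(h)$}, not merely in energy: a plain cut-off of $r^me^{-Br^2/4h}$, or a linear boundary transition, already gives the right leading order of the energy but makes the Temple remainder of size $\lambda_m/h$, hence useless. It is precisely the cancellation in $-h^2g''-hBR\,g'$ produced by the exponential profile $1-e^{-BR(R-r)/h}$ that brings $\|(\mathcal P_m(h)-\lambda_m)f_m\|^2$ down from $\mathcal O(h\,e^{-BR^2/2h})$ to $\mathcal O(h^3e^{-BR^2/2h})$; carrying the three Laplace/Watson expansions (numerator, norm, remainder) far enough to read off the $(1+\mathcal O(h))$ is the real work. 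An alternative is to note that $\mathcal P_m(h)f=\lambda f$ is solved by $f(r)=r^me^{-Br^2/4h}\,{}_1F_1\bigl(-\tfrac{\lambda}{2hB},m+1,\tfrac{Br^2}{2h}\bigr)$ and to analyse the first zero in $\lambda$ of $\lambda\mapsto{}_1F_1\bigl(-\tfrac{\lambda}{2hB},m+1,\tfrac{BR^2}{2h}\bigr)$, which trades the quasimode analysis for asymptotics of zeros of confluent hypergeometric functions.
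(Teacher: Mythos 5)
Your proposal is correct, but part~\ref{it:b} is handled by a genuinely different argument than the paper's. The paper proves~\ref{it:b} by solving the eigenvalue equation exactly: the regular solution is $e^{-Br^2/4h}r^m\,\mathbf M\bigl(-\tfrac{\lambda}{2hB},m+1,\tfrac{Br^2}{2h}\bigr)$, so the Dirichlet eigenvalues are the zeros in $\lambda$ of $\mathbf M\bigl(-\tfrac{\lambda}{2hB},m+1,\tfrac{BR^2}{2h}\bigr)$, and the zero-counting lemma for Kummer functions ($p(a,b)=\lceil -a\rceil$ for $a<0$, $b\geq 0$) shows there are at most $k$ eigenvalues below $2khB$. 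You instead complete the square, write $\mathcal P_m(h)=\mathcal H_m(h)-hB(m+1)$ with $\mathcal H_m$ the angular-momentum-$m$ radial piece of $-h^2\Delta+\tfrac{B^2}{4}|x|^2$, and compare by extension by zero and min--max with the explicit whole-plane spectrum $hB(|m|+1+2k)$. Your route is more elementary (no special-function input) and even sharpens~\ref{it:a} to $2|m|hB$; the paper's route has the side benefit of exhibiting the exact eigenfunctions, which you only invoke as an alternative at the end. For~\ref{it:a} the paper uses precisely the pointwise potential bound you sketch in parentheses, though your parenthetical as written conflates two decompositions: either bound the unexpanded square $h^2\bigl(\tfrac{m}{r}-\tfrac{Br}{2h}\bigr)^2\geq 2|m|hB$ and subtract $hB$, or bound the sum of squares $\tfrac{h^2m^2}{r^2}+\tfrac{B^2r^2}{4}\geq |m|hB$ and subtract $hB(m+1)$; combining the squared sum with the subtraction of $hB(m+1)$ is not consistent, but either correction gives $(2|m|-1)hB$, and your oscillator comparison covers~\ref{it:a} anyway, so this is a slip rather than a gap. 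For~\ref{it:c} both proofs follow the same quasimode-plus-Temple scheme with the gap $\beta=2hB$ supplied by~\ref{it:b}; the only difference is the quasimode. The paper takes $v=r^m\bigl(e^{(BR^2-Br^2)/4h}-e^{(Br^2-BR^2)/4h}\bigr)$, for which $\mathcal P_m v=2(m+1)hB\,r^me^{(Br^2-BR^2)/4h}$ exactly, so the energy, norm and Temple remainder are immediate Laplace integrals; your boundary-layer profile $1-e^{-BR(R-r)/h}$ requires computing $T_mg$ by hand but yields the same remainder $\mathcal O\bigl(h^3e^{-BR^2/2h}\bigr)$ and the same leading term, and your identification of the graph-norm accuracy of the quasimode as the crux of the lower bound is exactly the point the paper exploits.
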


\begin{proof}[Proof of Theorem~\ref{thm:Pauli-disk}]
We know from~\cite{E1} that the lowest eigenvalue comes from the angular
momentum $m=0$, i.e.
$\lambda_{P_-,0}^D(h)=\lambda_{0,0}^D(h)$. Thus, the result in 
Theorem~\ref{thm:Pauli-disk} is a direct consequence of statement~\ref{it:c} in 
Theorem~\ref{thm:Pauli-m}.
\end{proof}

\begin{proof}[Proof of Theorem~\ref{thm:Pauli-m}]
We divide the proof into several steps.

\subsubsection*{Proof of~\ref{it:a}}
This follows directly from an estimate of the potential,
\[
h^2\Bigl(\frac{m}{r}-\frac{Br}{2h}\Bigr)^2
=\Bigl(\frac{hm}{r}\Bigr)^2+\Bigl(\frac{Br}{2}\Bigl)^2-mhB\geq 2|m|hB\,,
\]
and a comparison of quadratic forms.

\subsubsection*{Proof of~\ref{it:b}}
We will use the fact that the eigenfunctions of $\mathcal P_m$ 
can be expressed in terms of Kummer functions together with a result 
on zeros of these functions, explained below.

The Kummer differential equation reads
\[
z\frac{d^2w}{dz^2}+(b-z)\frac{dw}{dz}-aw=0\,.
\]
One solution to Kummer's equation, that is regular at $z=0\,$, is given by
\[
\mathbf M(a,b,z)=\sum_{s=0}^{+\infty}\frac{(a)_s}{\Gamma(b+s)}z^s\,.
\]
Here $(a)_s$ denotes the Pochhammer symbol, $(a)_0=1$ and 
\[
(a)_s=a(a+1)\cdots(a+s-1)\quad \text{for $s\geq 1\,$.}
\]
It follows that a solution $u$ to the eigenvalue equation 
$\mathcal P_m u=\lambda u\,$, that is regular at $r=0\,$, is given by
\[
u(r)=e^{-Br^2/4h}r^m \,\mathbf M\Bigl(-\frac{\lambda}{2hB},m+1,\frac{Br^2}{2h}\Bigr)\,.
\]
Since we are interested in the Dirichlet case, the eigenvalues are determined by 
the condition $u(R)=0\,$, giving us the equation
\begin{equation}
\label{eq:eigeqm}
\mathbf M\Bigl(-\frac{\lambda}{2hB},m+1,\frac{BR^2}{2h}\Bigr)=0\,.
\end{equation}
Thus, given $b\in\mathbb{N}\setminus\{0\}$ we are interested in zeros of 
$a\mapsto \mathbf M(a,b,z)$ for large positive~$z$.
\begin{lemma}[{\cite[§13.9]{nist}}]
\label{lem:Mzeros}
Let $p(a,b)$ denote the number of positive zeros to the function 
$ \mathbb R \ni z \mapsto \mathbf M(a,b,z)\,$. Then
\begin{enumerate}[i),leftmargin=*]
\item $p(a,b) =0$ if $a\geq 0$ and $b \geq 0\,$.
\item $p(a,b) =\lceil - a \rceil$ if $a <0$ and $b \geq 0\,$. Here
$\lceil x\rceil$ denotes the smallest integer greater than or equal to $x$.
\end{enumerate}
\end{lemma}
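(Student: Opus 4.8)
The first statement I would prove directly from the power series. For $a\ge 0$ and $b>0$ every Pochhammer symbol $(a)_s$ is nonnegative, hence every coefficient $(a)_s/(\Gamma(b+s)\,s!)$ in the expansion of $\mathbf M(a,b,z)$ is nonnegative while the term $s=0$ equals $1/\Gamma(b)>0$; therefore $\mathbf M(a,b,z)>0$ for all $z>0$ and $p(a,b)=0$. (I take $b>0$ throughout --- this is the only case used, since in the application $b=m+1\ge 1$ --- and refer to \cite{nist} for the degenerate value $b=0$.)

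For the second statement the plan is to pass to the self-adjoint (Sturm--Liouville) form of Kummer's equation. Multiplying $zw''+(b-z)w'-aw=0$ by the integrating factor $z^{b-1}e^{-z}$ gives
\[
-\bigl(z^{b}e^{-z}w'\bigr)'=\lambda\, z^{b-1}e^{-z}\,w,\qquad \lambda=-a,
\]
and the solution that is regular (principal) at the singular endpoint $z=0$ is a constant multiple of $w(z)=\mathbf M(a,b,z)$. Hence, for a fixed $R>0$, the point $z=R$ is a zero of $z\mapsto\mathbf M(a,b,z)$ exactly when $-a$ is an eigenvalue of the Dirichlet problem for this operator on $(0,R)$; denote the eigenvalues by $\lambda_1(R)<\lambda_2(R)<\cdots$. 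Standard Sturm--Liouville theory gives that each $\lambda_k(R)$ is continuous and strictly decreasing in $R$, with $\lambda_k(R)\to+\infty$ as $R\to 0^+$ and $\lambda_k(R)\to\mu_k$ as $R\to+\infty$, where $\mu_k$ is the $k$-th eigenvalue of the limiting operator on $(0,\infty)$ in $L^2\bigl((0,\infty),z^{b-1}e^{-z}\,dz\bigr)$. That limiting operator is the Laguerre operator (its eigenfunctions are the Laguerre polynomials $L^{(b-1)}_{n}$, which form a complete orthogonal system), so its spectrum is $\{0,1,2,\dots\}$ and $\mu_k=k-1$. This is also consistent with the integer case: $\mathbf M(-n,b,z)$ is proportional to $L^{(b-1)}_{n}$, which for $b>0$ has precisely $n$ zeros, all of them positive.

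It then remains to count. For fixed $a<0$, the positive zeros of $z\mapsto\mathbf M(a,b,z)$ correspond to the pairs $(k,R)$ with $k\ge 1$, $R>0$ and $\lambda_k(R)=-a$, and the correspondence $(k,R)\mapsto R$ is injective because for each $R$ the numbers $\lambda_k(R)$ are pairwise distinct. Since $\lambda_k(\cdot)$ is a continuous strictly decreasing bijection of $(0,\infty)$ onto $(\mu_k,+\infty)=(k-1,+\infty)$, the equation $\lambda_k(R)=-a$ is solvable (and then uniquely) if and only if $k-1<-a$, i.e.\ for $k=1,\dots,\lceil-a\rceil$. Thus $p(a,b)=\lceil-a\rceil$. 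The step I expect to be the main obstacle is the analysis at the singular endpoint $z=0$ and the limit $\lambda_k(R)\to k-1$ as $R\to+\infty$: the first is dealt with by Weyl's limit-point/limit-circle theory (for $b\ge 1$ the regular solution provides the natural boundary condition), and the second follows from the min--max principle together with the completeness of $\{L^{(b-1)}_{n}\}$ in $L^2\bigl((0,\infty),z^{b-1}e^{-z}\,dz\bigr)$.
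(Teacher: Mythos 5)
Your argument is essentially correct, but note that the paper does not prove this lemma at all: it is quoted verbatim from the NIST handbook (\S 13.9, zeros of Kummer functions), so any proof you give is by necessity a different route. What you supply is a self-contained oscillation-theoretic proof: part i) follows at once from positivity of the series coefficients (your restriction to $b>0$ is harmless, since the application only uses $b=m+1\geq 1$, and $b=0$ is degenerate for the regularized function anyway), and part ii) is obtained by writing Kummer's equation in the self-adjoint form $-\bigl(z^{b}e^{-z}w'\bigr)'=\lambda\, z^{b-1}e^{-z}w$ with $\lambda=-a$, identifying positive zeros of $\mathbf M(a,b,\cdot)$ with Dirichlet eigenvalue crossings on $(0,R)$, and counting via strict monotonicity of $\lambda_k(R)$ in $R$ together with the limits $\lambda_k(R)\to+\infty$ as $R\to0^+$ and $\lambda_k(R)\to k-1$ as $R\to+\infty$ (the Laguerre spectrum); your bookkeeping then correctly yields $\lceil -a\rceil$, including the integer case $a=-n$ where $\mathbf M$ is proportional to $L_n^{(b-1)}$. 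The two points you yourself flag are indeed where the work lies and are not yet proofs as written: for $1\leq b<2$ the endpoint $z=0$ is in the limit-circle case, so you must specify the Friedrichs (principal-solution) boundary condition and check that it selects $\mathbf M$; and the convergence $\lambda_k(R)\to k-1$ needs both the min--max lower bound by zero-extension and an upper bound obtained by truncating the Laguerre eigenfunctions, using the $e^{-z}$ weight to control the cut-off error. Both are standard and can be completed, so your proposal stands as a legitimate alternative to citing the handbook; what the citation buys the paper is brevity, while your argument buys transparency and, in principle, uniformity statements one could extract from the Sturm--Liouville picture.
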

It follows that the equation~\eqref{eq:eigeqm} has no solutions for 
$\lambda<0$ (which we already know, since $P_-$ is non-negative) and 
at most $k$ solutions in the interval $0<\lambda<2khB$. We conclude
that $\lambda_{m,k}^D(h)\geq 2khB$.

\subsubsection*{Proof of the upper bound in~\ref{it:c}}
We work here with $\mathcal P_m$\,, $m\geq 0\,$. We will use as trial state
\[
v(r)=r^m\bigl(e^{BR^2/4h-Br^2/4h}-e^{Br^2/4h-BR^2/4h}\bigr).
\]
A calculation shows that, as $h\to 0^+\,$,
\[
\|v\|^2=\int_0^R v(r)^2\,r\,dr=2^m m!\Bigl(\frac{h}{B}\Bigr)^{m+1}e^{BR^2/2h}+\mathcal O(1)
\]
and
\begin{equation}
\label{eq:Pmv}
\mathcal P_mv=2(m+1)hBr^me^{Br^2/4h-BR^2/4h}\,.
\end{equation}
Multiplying $\mathcal P_m v$ with $v$ and integrating we find that, 
as $h\to0^+$,
\[
\langle v,\mathcal P_m(h)v\rangle
=hBR^{2(m+1)}+\mathcal O(h^2)\,.
\]
This gives the lower bound, as $h\to 0^+\,$,
\[
\lambda_{m,0}^D(h)\leq \frac{\langle v,\mathcal P_m(h)v\rangle}{\|v\|^2}
=\frac{2hB(BR^2/2h)^{m+1}}{m!}e^{-BR^2/2h}(1+\mathcal O(h))\,.
\]

\subsubsection*{Proof of the lower bound in~\ref{it:c}}
We will use the Temple inequality, saying that
\[
\lambda_{m,0}^D(h)\geq \eta-\frac{\epsilon^2}{\beta-\eta}\,,
\]
with
\[
\eta=\frac{\langle v,\mathcal P_m(h)v\rangle}{\|v\|^2},\quad
\epsilon^2=\frac{\|\mathcal P_m v\|^2}{\|v\|^2}-\eta^2,\quad
\beta=2hB\leq\lambda_{m,1}^D(h)\,.
\]
From~\eqref{eq:Pmv} we get
\[
\|\mathcal P_m v\|^2=4(1+m)^2h^3BR^{2m}(1+\mathcal O(h))\,.
\]
From the lower bound of the second eigenvalue, we take $\beta=2hB$.
Hence, if $h$ is sufficiently small, 
\[
\frac{1}{\beta-\eta}=\frac{1}{2hB}(1+\mathcal O(h^{+\infty})).
\]
In $\epsilon^2$ the term $\eta^2$ is negligible in comparison with 
the first term. Thus, as $h\to0^+\,$,
\[
\frac{\epsilon^2}{\beta-\eta}
=\frac{2h}{BR^2}(1+m)^2\frac{2hB(BR^2/2h)^{m+1}}{m!}e^{-BR^2/2h}(1+\mathcal O(h))\,.
\]
Thus, compared with $\eta$ this term has an extra power of $h$, and 
thus can be considered as an error term, and we get the lower bound
as $h\to 0^+$
\begin{equation}
\lambda_{m,0}^D(h)\geq 
\frac{2hB(BR^2/2h)^{m+1}}{m!}e^{-BR^2/2h}(1+ \mathcal O(h))\,.
\end{equation}
This completes the proof of Theorem~\ref{thm:Pauli-m}.
\end{proof}

\begin{remark}
In fact, the calculations in the proof of Theorem~\ref{thm:Pauli-m} 
suggest that if $\lambda^D_{P_-,0}(h)\leq 
\lambda^D_{P_-,1}(h)\leq\cdots$ denotes the increasing sequence of eigenvalues
of $P_-$ then
\[
\lambda^D_{P_-,m}(h)=2hB \frac{(BR^2/2h)^{m+1}}{m!} e^{-BR^2/2h}(1+\mathcal O(h))\,.
\]
This would have given an alternative proof of Erd\H{o}s's result \cite{E1} but 
the uniform control in $m$ is missing in the estimates given by Theorem~\ref{thm:Pauli-m}.
\end{remark}

\section{A Faber-Krahn type  Erd\H{o}s inequality }\label{ss6.1}
We recall one by L.~Erd\H{o}s (\cite{E1,E2})  which is useful in this context.
\begin{proposition}\label{prop:erd}
For any planar domain $\Omega$ and $B  >0$, let $\lambda^{D}(h, B , \Omega)$ be 
the ground state energy of the Dirichlet realization of the semi-classical 
magnetic Laplacian with constant magnetic field equal to $B$ in $\Omega$. Then 
we have:
\begin{equation}\label{ineq}
\lambda^{D}(h,B , \Omega) \geq \lambda^{D} (h,B , D(0,R))\;,
\end{equation}
where $D(0,R)$ is the disk with same area as $\Omega\,$:
\[
\pi R^2 = \Area(\Omega)\,.
\]
Moreover the equality in (\ref{ineq})  occurs if and  only if $\Omega = D (0,R)\,$.
\end{proposition}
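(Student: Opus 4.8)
The statement is Erd\H{o}s's magnetic Faber--Krahn inequality, and the natural route is a \emph{symmetrization} argument comparing $\Omega$ with the disk of the same area through the level sets of the modulus of a ground state; the place where the constancy of $B$ is essential is that then the magnetic flux through any subdomain is proportional to its area. Concretely, let $u$ be an $L^2$-normalized ground state of the magnetic Laplacian on $\Omega$, taken in the Coulomb gauge~\eqref{eq:5a} (so that $A=\nabla^\perp\psi$ with $\psi$ the function of~\eqref{eq:psi}, here with constant $B$), and write $u=\rho\,e^{i\phi}$ with $\rho=|u|$. From the pointwise identity
\[
|(h\nabla-iA)u|^2=h^2|\nabla\rho|^2+\rho^2\,|h\nabla\phi-A|^2 ,
\]
the Rayleigh quotient splits into a \emph{Dirichlet part} $h^2\int_\Omega|\nabla\rho|^2$ and a \emph{magnetic part} $\int_\Omega\rho^2|h\nabla\phi-A|^2$, both divided by $\|\rho\|^2$. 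The plan is to bound each from below by a functional of the decreasing rearrangement $\tilde\rho$ of $\rho$, regarded as a function on $[0,|\Omega|]$ with the area $\sigma=\pi r^2$ as variable, using the coarea formula along the level curves $\{\rho=t\}$ together with the isoperimetric inequality $\mathcal{H}^1(\{\rho=t\})^2\ge 4\pi\,|\{\rho>t\}|$.

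For the Dirichlet part this is precisely the P\'olya--Szeg\H{o} inequality, $h^2\int_\Omega|\nabla\rho|^2\,dx\ge 4\pi h^2\int_0^{|\Omega|}\sigma\,\tilde\rho'(\sigma)^2\,d\sigma$, with equality forcing (up to translation and null sets) $\Omega$ to be a disk and $\rho$ radially symmetric decreasing. For the magnetic part, Stokes' theorem on the region $\{\rho>t\}$ gives
\[
\oint_{\{\rho=t\}}(h\nabla\phi-A)\cdot\tau\,d\mathcal{H}^1 = 2\pi h\,N(t)-B\,|\{\rho>t\}| ,
\]
where $N(t)\in\mathbb{N}$ counts the zeros of $u$ inside $\{\rho>t\}$ with multiplicity; hence $\oint_{\{\rho=t\}}|h\nabla\phi-A|\,d\mathcal{H}^1\ge B\,|\{\rho>t\}|-2\pi h\,N(t)$. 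Feeding this into two Cauchy--Schwarz inequalities along $\{\rho=t\}$ — one to pass from $|h\nabla\phi-A|$ to $|h\nabla\phi-A|^2/|\nabla\rho|$, one to relate $\oint_{\{\rho=t\}}|\nabla\rho|\,d\mathcal{H}^1$ to the length of $\{\rho=t\}$ and to $-\tfrac{d}{dt}|\{\rho>t\}|$ — and then invoking the isoperimetric inequality and integrating in $t$ by the coarea formula, one is led to
\[
\lambda^{D}(h,B,\Omega)\ \ge\ \frac{\displaystyle\int_0^{|\Omega|}\Bigl(4\pi h^2\,\sigma\,\tilde\rho'(\sigma)^2+\tfrac{B^2}{4\pi}\,\sigma\,\tilde\rho(\sigma)^2\Bigr)d\sigma}{\displaystyle\int_0^{|\Omega|}\tilde\rho(\sigma)^2\,d\sigma}\,,
\]
the bookkeeping being arranged so that for a radially symmetric decreasing profile all the employed inequalities become equalities and the right-hand side becomes an identity.

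The right-hand side above is the Rayleigh quotient of the Friedrichs realization of $-4\pi h^2\,\tfrac{d}{d\sigma}\!\bigl(\sigma\tfrac{d}{d\sigma}\bigr)+\tfrac{B^2}{4\pi}\,\sigma$ on $[0,|\Omega|]$ with a Dirichlet condition at $\sigma=|\Omega|$; substituting $\sigma=\pi r^2$ identifies this operator with the zero-angular-momentum reduction of the magnetic Laplacian on $D(0,R)$, $\pi R^2=|\Omega|$. Since by Erd\H{o}s's theorem~\cite{E1,E2} the ground state of the magnetic Laplacian on the disk belongs to that sector, the infimum of the displayed quotient is exactly $\lambda^{D}(h,B,D(0,R))$, which gives~\eqref{ineq}. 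If equality holds in~\eqref{ineq}, then equality must hold in the isoperimetric inequality for almost every level $t$, so every super-level set $\{\rho>t\}$ is a disk; being nested and exhausting $\Omega$ (a ground state cannot vanish on a set of positive measure), they are concentric, whence $\Omega=D(0,R)$ and $u$ is the radial ground state up to gauge.

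I expect the main obstacle to be the magnetic part. Unlike the Dirichlet part it does \emph{not} symmetrize on its own: the naive inequality ``magnetic energy of $\rho$ $\ge$ magnetic energy of the radial rearrangement $\rho^{*}$ in the radial gauge'' is false (it already fails on a thin rectangle, where $\rho$ can follow the gauge so that its magnetic energy is far below that of $\rho^{*}$). What rescues the argument is the constancy of $B$, which turns the circulation into $B\,|\{\rho>t\}|$, together with estimates that couple the magnetic and Dirichlet contributions level by level rather than bounding them separately. Two further technical points need care: the contribution $N(t)$ of the zeros of $u$ — the phase $\phi$ is only defined locally, where $u\neq0$ — which one must show is harmless or favorable, and the usual measure-theoretic subtleties of the coarea formula (critical values of $\rho$, the set $\{\nabla\rho=0\}$). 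Both are handled in Erd\H{o}s's papers~\cite{E1,E2}.
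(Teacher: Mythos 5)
First, be aware that the paper offers no proof of Proposition~\ref{prop:erd}: it is quoted from Erd\H{o}s \cite{E1,E2} and used as a black box, so your attempt has to be measured against the literature proof rather than against anything in the text. Your outline does follow the right strategy, and several of your reductions are correct: the splitting of the quadratic form into $h^2\int_\Omega|\nabla\rho|^2$ plus the phase term, the P\'olya--Szeg\"o treatment of the first term in the area variable $\sigma=\pi r^2$, the identification of $-4\pi h^2\frac{d}{d\sigma}\bigl(\sigma\frac{d}{d\sigma}\bigr)+\frac{B^2}{4\pi}\sigma$ with the zero angular momentum reduction of the magnetic Laplacian on $D(0,R)$, and the observation that constancy of $B$ enters only through the flux $B\,|\{\rho>t\}|$. (At the last step you do not even need the symmetry of the disk ground state: in the radial gauge every real radial trial function on $D(0,R)$ has two-dimensional Rayleigh quotient equal to the one-dimensional quotient, so the infimum of the latter is automatically at least $\lambda^{D}(h,B,D(0,R))$, which is all that \eqref{ineq} requires.)

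The genuine gap is the step you flag and then outsource. The displayed inequality, in which the vorticity $N(t)$ has been dropped, cannot be obtained from the circulation identity, Cauchy--Schwarz and the isoperimetric inequality alone: that chain uses no property specific to the ground state, whereas the resulting separated bound is false for trial states carrying vorticity. For instance, on $\Omega=D(0,1)$ take $u=\rho(r)e^{im\theta}$ with $\rho$ a bump on an annulus of width $w\sim\sqrt{h/B}$ around $r_0=3/4$ and $m$ the integer nearest to $Br_0^2/2h$: its Rayleigh quotient is of order $hB$, while the right-hand side of your displayed bound is of order $B^{3/2}\sqrt{h}$, because the rearranged profile lives on $\sigma\in[0,\,O(\sqrt{h/B})]$ and the term $\frac{B^2}{4\pi}\,\sigma\tilde\rho^2$ dominates. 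So showing that the winding term $2\pi h N(t)$ is ``harmless or favorable'' for the true minimizer (or keeping it and proving that the reduced functional is still minimized at zero vorticity) is not a technical afterthought that can be cited away: it is precisely the analytic heart of Erd\H{o}s's proof, and the same remark applies to the rigidity part of the equality statement, which needs more than isoperimetric equality for almost every level. As an outline of the known argument your proposal is faithful; as a proof it is missing its central step.
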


Hence we can combine this proposition with the optimal lower bound obtained 
for the disk (with $m=0$).

\section{Properties of the scalar potential---the torsion problem}
\label{sec:torsion}

In this section, we revisit a discussion of \cite{EKP}  about relating the decay 
rate with either the geometry of $\Omega$ or the properties of the magnetic 
field. We would like to see if the fact that we have the optimal $\psi_{min}$ 
can lead to the improvements of the statements of \cite{EKP} by using classical 
results in the theory of the torsion function. In this section we assume that 
$B(x)=1$, so $-2\psi  =f_\tau $ is the torsion function in $\Omega$, for which we 
collect available upper bounds. Our main reference is the book by 
R. Sperb~\cite{Sp}.

\subsection{Saint--Venant torsion problem and application}\label{s3.1}

We refer here to Example~3.4 in \cite{K2}. Let $\Omega \subset \mathbb R^n$ be 
a strictly convex domain and let $f_\tau$ be the solution of the Saint--Venant torsion problem
\begin{equation}
\begin{cases}
\Delta f_\tau  = -2 &\text{ in } \Omega\\
f_\tau= 0 &\text{ on } \partial \Omega.
\end{cases}
\end{equation}
The function $f_\tau$ is also known as warping function. For $n=2$ it is known from \cite{K1}  that the square root of $f_\tau$ is a strictly concave function. For $n\geq 2$  the concavity follows from Lemma 3.12 and Theorem 3.13 in 
\cite{K2}. As a consequence, we get
\begin{proposition}\label{propkaw}
If $\Omega$ is strictly convex and $B(x)=1$, then there exists a unique $x_0$ such that $\psi(x_0) =\psi_{min}$\,.
\end{proposition}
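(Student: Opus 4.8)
The plan is to deduce Proposition~\ref{propkaw} directly from the concavity statement recalled just above it, namely that $\sqrt{f_\tau}$ is strictly concave on the strictly convex domain $\Omega$ (for $n=2$ this is Kawohl's result \cite{K1}; for $n\geq 2$ it follows from Lemma~3.12 and Theorem~3.13 in \cite{K2}). Since $B(x)=1$ we have $f_\tau = -2\psi$, so $\psi$ attains its minimum $\psi_{min}<0$ exactly where $f_\tau$ attains its (strictly positive) maximum. Thus it suffices to show that a strictly concave function on a convex set has a unique maximizer; the extra square root is harmless because $t\mapsto t^2$ is strictly increasing on $[0,\infty)$, so $f_\tau = (\sqrt{f_\tau})^2$ has the same argmax as $\sqrt{f_\tau}$.

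First I would record that $f_\tau>0$ in $\Omega$: this is immediate from the maximum principle applied to $\Delta f_\tau = -2 < 0$ together with the boundary condition $f_\tau = 0$ on $\partial\Omega$, so $g:=\sqrt{f_\tau}$ is well-defined, continuous on $\overline\Omega$, smooth and strictly concave in $\Omega$, and vanishes on $\partial\Omega$. Next I would invoke compactness: $\overline\Omega$ is compact and $g$ is continuous, so $g$ attains its maximum at some $x_0\in\overline\Omega$; since $g=0$ on $\partial\Omega$ and $g>0$ in $\Omega$, the maximum is attained in the interior, $x_0\in\Omega$, and $g(x_0) = \max_{\overline\Omega} g > 0$.

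For uniqueness I would argue by contradiction: suppose $g(x_0)=g(x_1)=\max g$ with $x_0\neq x_1$, both necessarily in $\Omega$. By convexity of $\Omega$ the segment $[x_0,x_1]$ lies in $\Omega$, and by strict concavity $g\bigl(\tfrac{x_0+x_1}{2}\bigr) > \tfrac12 g(x_0)+\tfrac12 g(x_1) = \max g$, a contradiction. Hence $x_0$ is the unique maximizer of $g$, therefore of $f_\tau=g^2$, therefore the unique minimizer of $\psi = -\tfrac12 f_\tau$, which is the claim.

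I do not expect a genuine obstacle here: the whole content is borrowed from the cited concavity theorems, and the remaining steps (positivity of $f_\tau$ by the maximum principle, existence of an interior maximizer by compactness plus the boundary vanishing, uniqueness by strict concavity along segments) are short and standard. The only point deserving a word of care is the passage from $\sqrt{f_\tau}$ being strictly concave to $f_\tau$ having a unique maximizer — which is precisely why one keeps the square root in the statement of the input result rather than asserting concavity of $f_\tau$ itself — but this is handled by the monotonicity of squaring on the nonnegative reals as noted above.
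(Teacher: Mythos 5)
Your proposal is correct and follows essentially the same route as the paper: the paper simply cites Kawohl's strict concavity of $\sqrt{f_\tau}$ (\cite{K1} for $n=2$, Lemma~3.12 and Theorem~3.13 of \cite{K2} for $n\geq 2$) and states the proposition ``as a consequence,'' leaving implicit exactly the elementary steps you spell out (positivity of $f_\tau$ via the maximum principle, an interior maximizer by compactness and the boundary condition, uniqueness from strict concavity along segments, and the harmless passage from $\sqrt{f_\tau}$ to $f_\tau=-2\psi$).
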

Note that at $x_0$, the Hessian of $\psi$ is definite positive. This can be deduced from the information on $f_\tau =-2 \psi$ given by Kawohl's result. At $x_0$, we have indeed
\[
\Hess \psi (x_0) = -  f_\tau^\frac 12  (x_0)\,  \Hess f_\tau ^\frac 12 (x_0)\,,
\]
and the trace of the Hessian of $\psi$ at $x_0$ is positive.

If $\Omega$ is not  convex, we can loose the uniqueness of $x_0$ as for example in the case of the dumbell (see Figure \ref{fig:dumbell}).

\begin{figure}[ht]
\centering
\includegraphics[width=5cm]{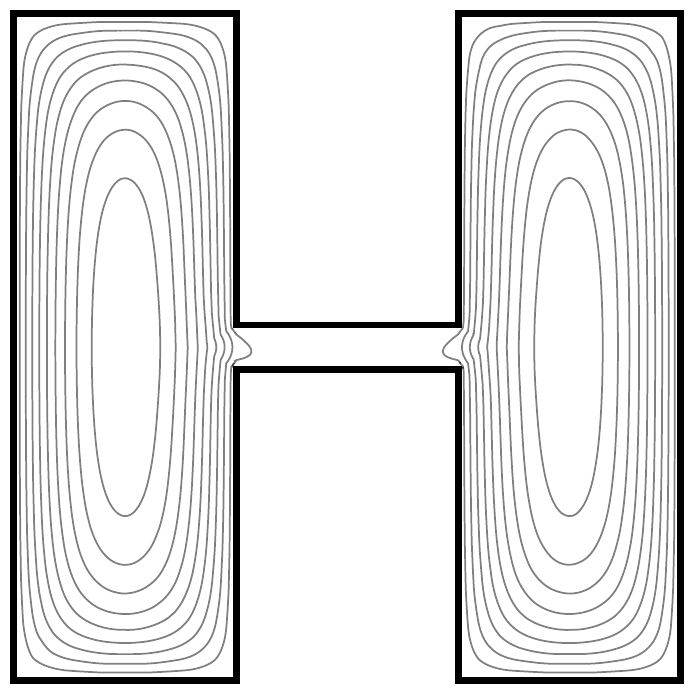}
\caption{Levels of $\psi$ in the case of the dumbell.}
\label{fig:dumbell}
\end{figure}

\begin{remark}
\begin{itemize}[--,leftmargin=*]
\item[]
\item Some literature in Mechanics (see for example \cite{FNJ}) claims  the concavity 
of the torsion function $f_\tau $ in the strictly convex case (at least when $f_\tau $ is 
a polynomial) but this is obviously wrong in the case of the equilateral 
triangle (see next subsection). 
\item It is natural to ask the same question 
for solutions of $\Delta u =- 2 B(x)$   for $B>0$ non-constant.  We have no 
condition on $B$ to propose  implying the uniqueness  
for the maximum of $u$. We have
only verified that the argument given in~\cite{K2} in the case of constant
$B$ breaks down as soon as $B$ is not constant.
\end{itemize}
\end{remark}

\subsection{Examples}\label{s3.2}

Below we discuss the situation in some simple domains. We refer to~\cite{FNJ} 
for many other examples of domains for which explicit (or semi-explicit) 
computations can be done.  In Figure~\ref{fig:contours}, we show the level sets 
obtained for $\psi$ using Mathematica.

\begin{figure}[ht]
\centering
\includegraphics[width=4cm]{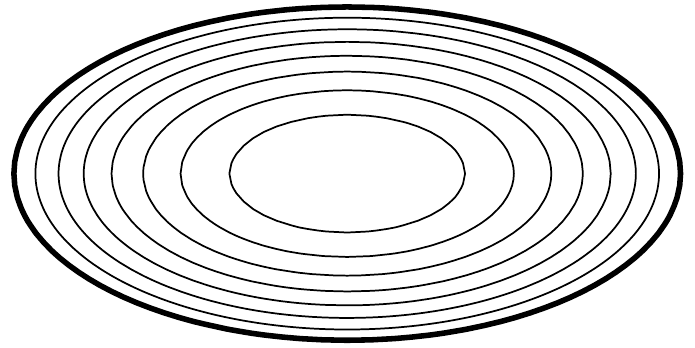}
\hskip1cm
\includegraphics[width=4cm]{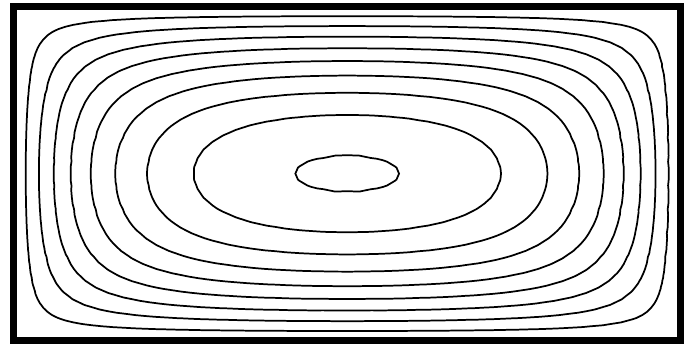}\\[3ex]
\includegraphics[width=4cm]{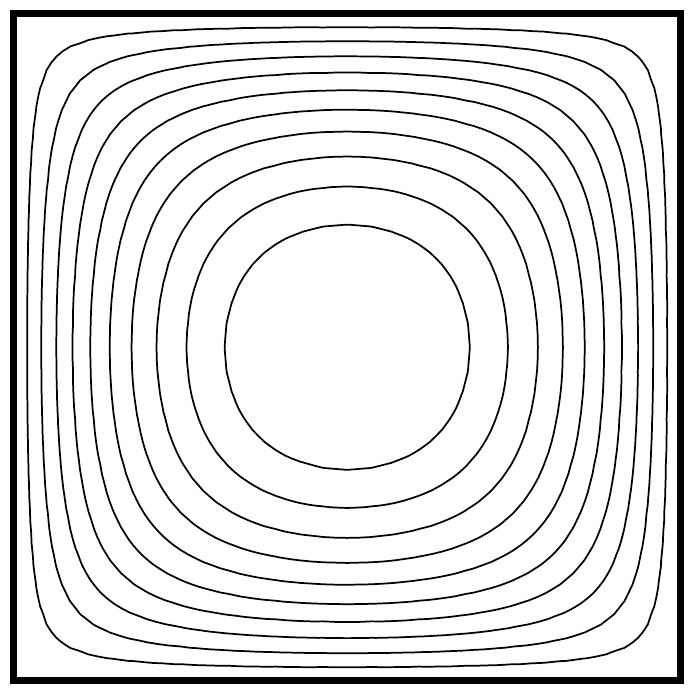}
\hskip1cm
\includegraphics[width=4cm]{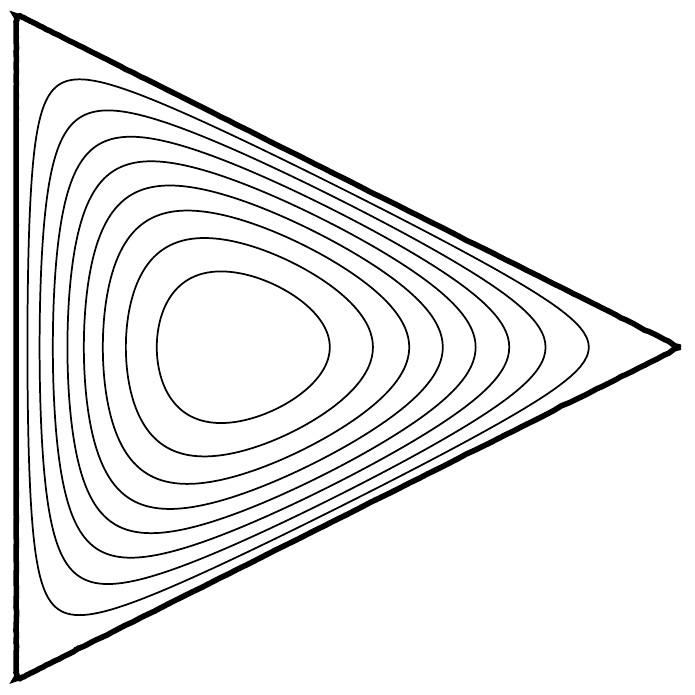}
\caption{Levels of $\psi$ in the case of the ellipse, rectangle, 
square and triangle.}
\label{fig:contours}
\end{figure}

\subsubsection{The disk}
 If $\Omega = D(0,1)$ and $B=1$, we have 
 \begin{equation}\label{psiminopt}
 \psi(x) = \frac 14 | x|^2  - \frac 14\,, \quad \psi_{min} = - \frac 14\,.
 \end{equation} 
Theorem~\ref{th2.1} gives  a rather accurate lower bound but see 
Section~\ref{sec:disk}  for the optimal result.
 
\subsubsection{The ellipse}  If $\Omega=\bigl\{(x,y)\in \mathbb R^2\,,\, \frac{x^2}{a^2} + \frac{y^2}{b^2} < 1\bigr\}$. then, for $B=1\,$,
 \begin{equation}\label{ellipse}
 \psi (x,y) = \frac{ 1}{ 2/a^2 + 2/b^2} \, \Bigl( \frac{x^2}{a^2} + \frac{y^2}{b^2} -1 \Bigr)\,,
 \quad
 \psi_{min} = - \frac{ 1}{ 2/a^2 + 2/b^2} \,.
 \end{equation}

\subsubsection{The rectangle}
For the rectangle  $(-\frac a2,\frac a2)\times( -\frac b2, \frac b2)$, 
we have an explicit solution using Fourier series (see~\cite{FNJ} or
this course\footnote{\url{http://people.inf.ethz.ch/tulink/FEM14/Ch1_ElmanSyvesterWathen_Ox05.pdf}}):
\begin{equation} \label{rect}
\psi_{a,b} (x,y) = - \sum_{(k,\ell)\in \mathbb N_{odd}^2} \frac{(-1)^{\frac{k+\ell}{2} -1}2^4 a^2b^2}{\pi^4k\ell (k^2 b^2+\ell^2 a^2)} \, \cos \frac {k\pi x}{a}\, \cos \frac{\ell \pi y}{b}.
\end{equation}
In the case of the square (with $a=b=2$) one has $\psi_{2,2}(0,0)\approx-0.294685$. 
In the limit $b \to +\infty$ one recovers the argument of~\cite{EKP}, who use
the function $\psi$ corresponding to the band of size $a$:
\[
\psi_{a,\infty} (x) =\frac{1}{2} \Bigl(x^2- \Bigl(\frac a2\Bigr)^2\Bigr)\,,\quad \psi_{a,\infty,min} =- \frac{a^2}{8}\,.
\]

\subsubsection{The equilateral triangle}
For the equilateral triangle, there is an explicit formula given by
\begin{equation}\label{equil}
 \psi (x,y)  = -\frac{1}{4a}\,  \Bigl(x-\sqrt{3} y - \frac 23 a\Bigr) 
\Bigl(x+\sqrt{3} y - \frac 23 a\Bigr) 
\Bigl(x+\frac 13 a\Bigr)\,,
\quad \psi_{min} = - \frac{a^2}{27}\,,
\end{equation}
the minimum of $\psi$ is attained at $(0,0)$\,.


 \subsection{Known bounds for the torsion function}
The problem of the torsion of an elastic beam is explained in \cite[p.~3]{Sp}. 
But the main 
properties are analyzed in Chapter~6 (mainly Section~6.1). For a given 
$\Omega$, we can define the diameter $\delta (\Omega)$, the maximal 
width\footnote{$\ell (\Omega)$ is the maximum (over the directions) of the 
maximal width in one direction} $\ell (\Omega)$ and the inner radius 
$\rho (\Omega)$. We have the following estimates:
\begin{align}
\label{delta}
\psi_{min} &\geq -\frac{\delta (\Omega)^2}{4}\,,\\
\label{ell}
\psi_{min} &\geq -\frac{\ell(\Omega)^2}{8}\,,\\
\shortintertext{and, in the convex case,}
\label{rho}
\psi_{min} &\geq -\frac{\rho(\Omega)^2}{2}\,.
\end{align}
Inequality~\eqref{ell} permits to recover the result of~\cite{EKP} 
and~\eqref{rho} in the convex case is better than the corresponding statement 
in~\cite{EKP}. There are more accurate estimates, when $\Omega$ has two axes of 
symmetry, see equation~(6.14) in~\cite{Sp}.

It is also interesting to mention the inequality due, according to the book of 
R.~Sperb~\cite[p.~193]{Sp},  to P\'olya--Szeg\"o~\cite{PS} (1951):
\begin{equation}\label{ineqPS}
 \psi_{min} (\Omega) \geq \psi_{min} (D(0,R))\,,
\end{equation}
where $R$ is the radius of the disk of same area as $\Omega$. This result is 
coherent with the Erd\H{o}s result recalled in Proposition \ref{prop:erd}.

\subsection{General comparison statements  for $\psi_{min}$}
In order to have lower bounds or upper bounds for $\psi_{min}$, one way, using 
the maximum principle, is to either find $ \psi^{sub}$ such that
\begin{equation} \label{sub}
\Delta \psi^{sub} \geq B\text{ in } \Omega\,;\,    \psi^{sub} \leq 0\text{ on } \partial \Omega\,,
\end{equation}
which will imply
\begin{equation}\label{lwsub}
\psi^{sub}_{min} \leq \psi_{min}\,,
\end{equation}
or  to find $\psi^{sup}$ 
\begin{equation}\label{sup}
\Delta  \psi^{sup} \leq B\text{ in } \Omega\,;\,  \psi^{sup} \geq 0\text{ on } \partial \Omega\,,
\end{equation}
which will imply
\begin{equation}\label{lwsup}
\psi^{sup}_{min} \geq \psi_{min}.
\end{equation}

This  can be used in 
different ways:
\begin{itemize}[--,leftmargin=*]
\item In the case, when $B(x) >0$, use the comparison  between the magnetic 
field $B(x)$ and the constant magnetic field $\inf B$ and $\sup B$.  
\item Recover Theorem 2.2  in~\cite{EKP} but \eqref{ell} is more direct. 
\end{itemize}
Note that this gives for $\psi_{min}$ a monotonicity result with respect to 
$B$ which is not necessarily true for the Pauli operator itself.

\subsection{The results by C.~Bandle}
In~\cite{Ba} C.~Bandle uses isoperimetric techniques and the conformal mapping
theorem. The domain $\Omega$ is supposed to be simply connected
and $B>0$ is supposed to satisfy
\[
\Delta\log B(x)+2C B(x)\geq 0\quad\text{in }\overline{\Omega}
\]
for some constant $C\in\mathbb R\,$. If $u$ solves
\[
\begin{cases}
\Delta u = -B(x)&\text{in $\Omega$}\\
u = 0 & \text{on $\partial\Omega$}\,,
\end{cases}
\]
and if $CM<4\pi$, where $M=\int_\Omega B(x)\,dx$ is the flux of the magnetic
field, then $u$ satisfies the bound
\[
u(x)\leq \frac{1}{C}\log\frac{4\pi}{4\pi-CM}\,.
\]
with equality when $\Omega$ is the disk, and $B(x)=(1+C|x|^2/4)^{-2}$ (or a
situation conformally equivalent to this). When $C=0$, the result reads 
\[
u(x)\leq \frac{M}{4\pi}\,,
\]
and we recover~\eqref{ineqPS} in the case $B>0$ constant.

In~\cite{ScSp,Sp1} one can find other estimates, involving curvature terms.

\subsection{Torsion, lifetime  and Hardy inequality}
In the twodimensional case, one should also mention (see~\cite[Theorem~1]{BC1}) 
the following estimates:
\begin{proposition} 
Let us assume that $\Omega$ is simply connected in $\mathbb R^2$. 
With $\psi$ solution of  $\Delta \psi =1$ in 
$\Omega$, $\psi_{/\partial \Omega}=0$, we have
\begin{equation}\label{eq:vdbc}
\lambda(\Omega)^{-1} \leq  |\psi_{min}| 
\leq  \frac{7\zeta(3)}{16} {\bf j}^2  \lambda(\Omega)^{-1}\,.
\end{equation}
\end{proposition}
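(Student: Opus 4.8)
The plan is to prove the two estimates separately; only the upper one uses that $\Omega$ is planar and simply connected. For the lower bound, put $v=-\psi$, so that $-\Delta v=1$ in $\Omega$, $v=0$ on $\partial\Omega$, $v\ge 0$, and $\max_{\overline{\Omega}}v=|\psi_{min}|$. If $\varphi_1>0$ denotes the first Dirichlet eigenfunction of $-\Delta$ in $\Omega$, two integrations by parts give
\[
\int_\Omega v\,\varphi_1\,dx=\frac1{\lambda(\Omega)}\int_\Omega v\,(-\Delta\varphi_1)\,dx=\frac1{\lambda(\Omega)}\int_\Omega(-\Delta v)\,\varphi_1\,dx=\frac1{\lambda(\Omega)}\int_\Omega\varphi_1\,dx ,
\]
and since $0\le v\le|\psi_{min}|$ and $\varphi_1>0$ the left-hand side is at most $|\psi_{min}|\int_\Omega\varphi_1\,dx$. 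Hence $|\psi_{min}|\ge\lambda(\Omega)^{-1}$; this half uses neither planarity nor simple connectivity.

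For the upper bound I would use conformal mapping. Let $x^\ast\in\Omega$ be an interior point at which $v$ is maximal, and let $f\colon D\to\Omega$ be a Riemann map of the unit disk $D$ with $f(0)=x^\ast$, written $f(w)=x^\ast+\sum_{n\ge1}b_nw^n$. The first step is the identity
\[
|\psi_{min}|=v(x^\ast)=\frac1{2\pi}\int_D\log\frac1{|w|}\,|f'(w)|^2\,dA(w)=\frac14\sum_{n\ge1}|b_n|^2 ,
\]
which follows from the representation $v(x)=\int_\Omega G_\Omega(x,y)\,dy$, where $G_\Omega$ is the Green function of $-\Delta$, combined with the conformal invariance $G_\Omega(f(0),f(w))=\frac1{2\pi}\log\frac1{|w|}$, the change of variables $dy=|f'(w)|^2\,dA(w)$, and the elementary integral $\int_0^1 r^{2n-1}\log\frac1r\,dr=\frac1{4n^2}$ after expanding $|f'|^2$ in polar coordinates. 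The second step is an upper bound for $\lambda(\Omega)$ in the same language: for a radial $g\in H^1_0(D)$ the function $u=g\circ f^{-1}$ lies in $H^1_0(\Omega)$, and by the conformal invariance of the planar Dirichlet integral $\int_\Omega|\nabla u|^2\,dx=\int_D|\nabla g|^2\,dA$ while $\int_\Omega u^2\,dx=\int_D g^2|f'|^2\,dA$, so that
\[
\lambda(\Omega)\le\frac{\int_D|\nabla g|^2\,dA}{\int_D g^2\,|f'|^2\,dA}
=\frac{\int_0^1 g'(r)^2\,r\,dr}{\sum_{n\ge1}n^2|b_n|^2\int_0^1 g(r)^2\,r^{2n-1}\,dr}.
\]

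Multiplying this by $|\psi_{min}|=\frac14\sum_n|b_n|^2$ turns the desired inequality $|\psi_{min}|\,\lambda(\Omega)\le\frac{7\zeta(3)}{16}\mathbf{j}^2$ into a comparison of weighted $\ell^2$-norms of the coefficient sequence $(|b_n|)_{n\ge1}$; after the optimal choice of the radial weight $g$ it becomes a one-dimensional Sturm--Liouville extremal problem, and the constant $\frac{7\zeta(3)}{16}\mathbf{j}^2$, with $\frac{7\zeta(3)}{16}=\frac12\sum_{k\ge0}(2k+1)^{-3}$, is the value one reads off from it. In particular the disk is not the extremal shape here, since for the unit disk $|\psi_{min}|\,\lambda(D)=\mathbf{j}^2/4$, well below $\frac{7\zeta(3)}{16}\mathbf{j}^2$.

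The main obstacle is exactly this last reduction together with the identification of the sharp constant. A crude Cauchy--Schwarz in $\int_D g^2|f'|^2\,dA$ is far too lossy: for \emph{every} $g\in H^1_0(D)$ one has $n^2\int_0^1 g(r)^2 r^{2n-1}\,dr\to0$ as $n\to\infty$, whereas $|\psi_{min}|$ weighs all the $|b_n|^2$ equally, so the high-order coefficients cannot simply be discarded. They must instead be controlled through the \emph{univalence} of $f$ --- area-theorem and distortion estimates for the class $S$, rather than only the bound $|b_n|\le n|b_1|$ --- since otherwise unbounded Koebe-type competitors would destroy a universal constant. Carrying out this balancing between the low- and high-frequency contributions, weighted by $\frac1{2\pi}\log\frac1{|w|}$ and by the optimal $g$, is the heart of the matter; a technically lighter variant is to take $u=v$ itself as trial function, giving $\lambda(\Omega)\le\frac{\int_\Omega v\,dx}{\int_\Omega v^2\,dx}$, and to reduce the problem to a self-contained inequality for the torsion function of simply connected planar domains.
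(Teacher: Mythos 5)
Your lower bound is complete and correct: pairing $v=-\psi$ with the positive first Dirichlet eigenfunction and integrating by parts twice gives $|\psi_{min}|\ge\lambda(\Omega)^{-1}$, and, as you note, this half uses neither planarity nor simple connectivity. For the record, the paper itself does not prove the proposition: it quotes it from Ba\~nuelos--Carroll \cite[Theorem~1]{BC1}, where (in probabilistic language, $|\psi_{min}|$ being the maximal expected lifetime of Brownian motion) the upper bound is obtained from univalent-function estimates controlling that lifetime by a multiple of the squared inradius $\rho(\Omega)^2$, combined with the inscribed-disk bound $\lambda(\Omega)\le \mathbf{j}^2/\rho(\Omega)^2$; the univalence input lives entirely in the first of these two steps.

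The upper bound in your write-up, however, is a programme rather than a proof, and the missing step is precisely the substantive one. The identity $|\psi_{min}|=\frac14\sum_n|b_n|^2$ and the transplantation bound $\lambda(\Omega)\le\bigl(\int_0^1 g'(r)^2\,r\,dr\bigr)\bigl(\sum_n n^2|b_n|^2\int_0^1 g(r)^2\,r^{2n-1}\,dr\bigr)^{-1}$ are correct, but, as you yourself observe, for any admissible radial $g$ the weights $n^2\int_0^1 g^2 r^{2n-1}\,dr$ decay like $1/n$, so no choice of $g$ can bound the product $\lambda(\Omega)\,|\psi_{min}|$ from the coefficient inequality alone; some quantitative consequence of univalence (area theorem, Koebe distortion, hyperbolic-metric estimates) must be injected, and you neither specify which estimate nor show how it closes the argument --- you explicitly defer this as ``the heart of the matter.'' The assertion that after the optimal choice of $g$ one ``reads off'' the constant $\frac{7\zeta(3)}{16}\mathbf{j}^2$ is unsubstantiated; that constant is visibly a product of two separate extremal quantities ($\frac78\zeta(3)=\sum_{k\ge0}(2k+1)^{-3}$ from a strip-type lifetime computation and $\mathbf{j}^2$ from the disk eigenvalue), not obviously the value of a single radial Sturm--Liouville problem, which strongly suggests the one-map optimization you propose does not by itself produce it. So the right-hand inequality of \eqref{eq:vdbc} is not established as written: either carry out the univalence-based control of the high-order coefficients (essentially reproving the lifetime-versus-inradius estimate of \cite{BC1}), or simply invoke \cite[Theorem~1]{BC1} as the paper does.
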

Note that
\[
\frac{7\zeta(3)}{16} {\bf j^2} < 0.5259   
\times   {\bf j}^2 < 0.5259\times 5.784025 < 3.0419\,.
\]
One can then combine with Hardy's inequality 
(see~\cite{An, BC1, BC2,Dav,LS,vdBC, Her}) but this does not seem to give 
significative improvements in our twodimensional situation when we compare with 
what is given by~\eqref{ell} or~\eqref{rho}. $|\psi_{min}|$ is also related to 
the maximal expected lifetime of a Brownian motion (see \cite{BC1,BC2}).

\bibliographystyle{plain}

\end{document}